\DeclareMathOperator{\trace}{Trace}
\newcommand{\R}{\mathbb R}
\newcommand\E{\mathbb E}
\newcommand\bP{\mathbb P}
\newtheorem*{thm*}{Theorem}
\newtheorem*{prop*}{Proposition}
\newtheorem{lem}{Lemma}
\newtheorem{defn}{Definition}
\newtheorem{prop}{Proposition}
\newtheorem{rem}{Remark}
\newcounter{rea}
\newcounter{rek}
\newtheorem{thm}{Theorem}
\begin{document}
\begin{center}
{\large {\bf  Random Discretization of the Finite Fourier Transform and Related Kernel
Random Matrices.}}\\
\vskip 1cm Aline Bonami$^a$ and Abderrazek Karoui$^b$ {\footnote{
This work was supported in part by the  French-Tunisian  CMCU project 10G 1504 project and the
Tunisian DGRST  research grants  UR 13ES47.}}
\end{center}

\vskip 0.5cm {\small
\noindent
$^a$ Institut  Denis-Poisson, Universit\'e d'Orl\'eans, Coll\'egium Sciences et Techniques, B\^atiment de math\'ematiques - Route de Chartres, B.P. 6759 - 45067 Orl\'eans cedex 2, FRANCE.\\
\noindent $^b$ University of Carthage,
Department of Mathematics, Faculty of Sciences of Bizerte, Jarzouna 7021,  TUNISIA.}\\
Emails: aline.bonami@univ-orleans.fr ( A. Bonami), abderrazek.karoui@fsb.rnu.tn (A. Karoui)\\

\noindent{\bf Abstract}---  This paper is centred on the spectral study of a Random Fourier matrix, that is an $n\times n$ matrix $A$ whose $(j, k)$ entries are $\exp(2i\pi m X_jY_k)$, with $X_j$ and $Y_k$ two i.i.d sequences of random variables and $1\leq m\leq n$ is a real number. When they are uniformly distributed on a symmetric interval, this may be seen as a random discretization of the Finite Fourier transform, whose spectrum has been  extensively  studied in relation with band-limited functions. Moreover, this particular case of random Fourier matrix has been proposed in wireless telecommunication in order to approach the singular values of some channel matrices.

Our study is two-fold. Firstly, by pushing forward concentration inequalities, we find an accurate comparison in $\ell^2$- norm between the spectrum of $A^*A$ and the one of an integral operator that can be defined in terms of the two probability laws chosen for the rows and the columns. Our study includes the one of stationary Hermitian kernel matrices and can be generalized to non stationary ones, for which the same kind of comparison with an integral operator is possible. Because of  possible applications in the data science area, these last  matrices have been largely studied in the literature and our results are compared with previous ones.

Secondly we concentrate on uniform distributions for the laws of $X_j$'s and $Y_k$'s, for which the integral operator is the well-known Sinc-kernel operator with parameter $m.$  Our previous study allows to translate to random Fourier matrices the knowledge that we have on the spectrum of this operator. We have for them asymptotic results for $m, n$ and $n/m$ tending to $\infty$, as well as   non asymptotic bounds in the spirit of recent work on the integral operators. As an application, we give fairly good approximations of the number of degrees of freedom and the capacity of  a MIMO  wireless communication network approximation model. The $\ell^2-$estimates given in the first part of this work seem  the right tool  to transfer approximations of these quantities from the integral operator to the random matrix.

  Finally, we provide the reader with some numerical examples that illustrate the theoretical results of this paper.\\ 

\noindent {2010 Mathematics Subject Classification.} Primary 42A38, 15B52. Secondary 60F10, 60B20.\\

\noindent {\it  Key words and phrases.} Eigenvalues and singular values, kernel random matrices, random discretization, finite Fourier transform, Sinc kernel operator,  number of degrees of freedom.\\

\section{Introduction.}
In this work, we are interested in  the spectra of
two families of $n\times n$ random matrices with real or  complex coefficients, namely matrices $A^*A$  and  $H_{\kappa},$ where the entries of $A$ and $H_{\kappa}$ are given by 
\begin{equation}\label{coefficients}
A_{j,k} = \frac {\sqrt m}{n}\exp(2i\pi m Z_jY_k),\quad H_{j,k}= \frac{1}{n} \kappa(Y_j-Y_k),\quad 1\leq j,k\leq n.
\end{equation}
Here, $m$ is a positive number,  $1\leq m \leq n$ and the  $Y_j, Z_k,\, j,k=1,\ldots,n$ are $2n$ independent random variables. We assume that the $Y_j$'s follow the same probability law $P$, while the $Z_k$'s follow the law $Q$.
 The two matrices are related by the fact that
\begin{equation}\label{kappa}
\kappa (x)= m\int e^{2i\pi m x y} dQ(y),
 \end{equation}
 which guarantees that $\kappa(x-y)$ is a Hermitian positive definite kernel on $\R.$ It is  real and symmetric whenever the law $Q$ is symmetric.
Moreover, from Bochner's Theorem, all continuous  positive definite functions on $\R$,  with $\kappa(0)=m,$ can be obtained in this way. With this relation between the two random matrices we prove that the two spectra are close when $n$ is large.
Also, we note that the matrices $A^*A$ and $H_{\kappa},$ are related to each other by the  relation
$ \E_Z (A^*A) = H_{\kappa}.$ 
 A typical example for the probability laws $P$ and $Q$ is  the  uniform law on $(-1/2,+1/2).$ We will be able to go much further in the study of the spectra in this case, but it is  interesting to see that an important part of our   study  does not depend on the choice of these probability laws. 
 The matrix $A$ may be seen as the matrix of a random  discretization of the  Fourier transform  $\mathcal F_P,$  defined    by
\begin{equation}\label{FiniteFourier}
\mathcal F_P f(x)= \sqrt{m} \, \int e^{2i\pi m xy}f(y) dP(y),
\end{equation}
which we consider as a bounded operator from $L^2(P)$ to $L^2(Q).$ When $P$ is the uniform law on $I=(-1/2, + 1/2),$ this operator is the usual  finite Fourier transform, defined by
\begin{equation}\label{Finite_Fourier}
\mathcal F_m f(x)= \sqrt{m} \, \int_{-1/2}^{1/2} e^{2i\pi m xy}f(y) d y.
\end{equation}
So in this case the matrix $A$ appears as giving a random discretization of the finite Fourier transform.

The product $T_{\kappa}=\mathcal F^*_P\mathcal F_P$ is easily computed and found to have $\kappa$ as kernel, so that
 \begin{equation}\label{kappam1}
T_{\kappa}(f)(x)=  \int \kappa(x,y) f(y)\, dP(y),\quad \kappa(x,y)= \kappa(x-y),
\end{equation}
where $\kappa(\cdot)$ is as given by \eqref{kappa}. Similarly, the matrix $H_{\kappa}$ may be seen as a random discretization of the  operator  $T_{\kappa}$. So $\mathcal F_P$ can be considered as a continuous analogue of $A$ while $T_\kappa$ is a continuous analogue of $H_\kappa.$

With the normalization constant $\frac {\sqrt m}{n}$ given in  the coefficients $A_{j,k},$  the Hilbert-Schmidt norm  of $A$ is equal to the Hilbert-Schmidt norm of   $\mathcal F_P.$ Recall that the Hilbert-Schmidt norm of $A$ is given by
${\displaystyle  \| A\|_{HS}^2 =\sum_{j,k=1}^n |A_{j,k}|^2.}$  The traces of $A^*A, H_\kappa$ and $T_\kappa$ are all equal to $m.$ 
 The operator  $T_{\kappa}$ is a positive definite Hilbert-Schmidt operator, which implies that its spectrum is infinite and countable, eventually completed by zeros, when it is of finite rank. 
We denote by  $\lambda(T_{\kappa})=(\lambda_j(T_{\kappa}))_{j\geq 0},$ the  sequence of its eigenvalues, arranged in decreasing order.  So, the spectra of $A^*A$ and $H_{\kappa}$  are also considered as infinite non decreasing sequences   by giving to $\lambda_j(A^*A)$ (resp. $\lambda_j(H_\kappa)$) the value $0$ for $j\geq n.$ The distance between  spectra will be computed in the usual $\ell^2$-norm.

The whole paper is based on a comparison  of  the spectra  of the matrices  $A^*A$ and $H_{\kappa}$ and  $T_{\kappa}$. In the first part of this work, we prove the following estimates in terms of quadratic means: 
 \begin{equation}\label{GineA2}
  \E\left(\|\lambda(A^*A)-\lambda(T_{\kappa})\|^2_{\ell^2}\right)\leq \frac{2m^2}{n},\quad 
   \E\left(\|\lambda(H_{\kappa})-\lambda(T_{\kappa})\|^2_{\ell^2}\right)\leq \frac{m^2}{n},
   \quad \E\left(\|\lambda(A^*A)-\lambda(H_{\kappa})\|^2_{\ell^2}\right)\leq \frac{m^2}{n}.
  \end{equation}
  Let us make two observations: the first inequality is not a superposition of the two other ones. Also, one may ask whether these inequalities are critical. We cannot answer this question up to now,  but numerical tests show that this is the case up to a small multiplicative constant.
Next, by an elaborated use of the scalar McDiarmid's concentration inequality, we prove that 
the approximation errors in the $\ell^2$-norm are  obtained
 with  large probability. More precisely, for any $\xi > 0,$ we  have 
\begin{equation}\label{Error1}
\|\lambda(A^*A)-\lambda(T_{\kappa})\|_{\ell^2} \leq \frac{\sqrt 2 m(\xi+1)}{\sqrt n},\;
\|\lambda(H_{\kappa})-\lambda(T_{\kappa})\|_{\ell^2} \leq \frac{m(\xi+1)}{\sqrt n},\; \|\lambda(A^*A)-\lambda(H_{\kappa})\|_{\ell^2} \leq\frac{m(\xi+1)}{\sqrt n},
\end{equation}
each of them with probability at least  $1-e^{-\xi^2}.$
Up to our knowledge, the results implying the random Fourier matrix $A$ are entirely new. We discovered the subject in the papers of Desgroseilliers, L\'ev\^eque and Preissmann \cite{DLP, DLP2}. Nonetheless, our comparison of the spectra of  $H_\kappa$ and $T_\kappa$ can be compared with existing results and gives some improvements of known estimates from the literature, in particular those of \cite{Rosasco2, Rosasco1}.  Our proofs for this case have benefited from the papers on random Gram matrices, and in particular  the references \cite{ BBZ, S-TC}. Indeed, the matrix $H_{\kappa}$  is a special case of a more general (kernel) Gram matrix $H_{\scriptscriptstyle \rm K},$ given by 
\begin{equation}\label{Hkappa}
H_{\kappa}=\frac 1n \left[{\kappa} (Y_j, Y_k)\right]_{1\leq j,k\leq n},
\end{equation}
 where the $Y_j$ are sample points drawn randomly according to a probability  law $P$ on some input space $\mathcal X,$ which is assumed to be a locally compact metric space. Also, we assume that the kernel 
   ${\kappa}(\cdot,\cdot)$ is Hermitian,  continuous  on  $\mathcal X  \times \mathcal X$, and   
   positive semi-definite. 
 Under these conditions, the integral
 operator $T_{\kappa},$  with kernel ${\kappa},$  defined on $L^2(P),$ by
 \begin{equation}
 \label{Integral_operator}
 T_{\kappa} (f)(x)= \int_{\mathcal X} {\kappa}(x,y) f(y) \, d P,\quad x\in \mathcal X,
 \end{equation}
is a Hilbert-Schmidt operator. Moreover, if we assume that ${\kappa}(\cdot,\cdot)$ is bounded and if $\displaystyle R=\sup_{y\in \mathcal X} {\kappa}(y,y),$ then our estimates can be generalized in this context as
\begin{equation}\label{generalkernel}
 \E\left(\|\lambda(H_{\kappa})-\lambda( T_{\kappa})\|_{\ell^2}^2\right)\leq \frac{R^2}{n},\quad 
 \|\lambda(H_{\kappa})-\lambda( T_{\kappa})\|_{\ell^2}\leq \frac{R(\xi+1)}{\sqrt n},
\end{equation}
with probability at least $1-e^{-\xi^2}$ for all $\xi> 0$. Since $R=m$ in our particular case, then this is clearly a generalization. We will explain in the next section the nature of improvements obtained in this generalized context compared to \cite{Rosasco2,Rosasco1}. The same proof, which we consider as simpler since it does not appeal to vector-valued concentration inequalities,  is operational in the three situations that we have in view.

We will also see that we have a slight improvement when using the $\ell^\infty$-norm instead of the $\ell^2$ one and find that, with probability $1-e^{-\xi ^2},$ 
$$\|\lambda(H_{\kappa})-\lambda( T_{\kappa})\|_{\ell^\infty}\leq \frac{R(\frac{\xi}{\sqrt 2}+1)}{\sqrt n}.$$
This implies the inequalities given in \cite{S-TCK}, but this is  stronger  since they only consider separate eigenvalues.

Random Gram matrices have been largely studied in connection with big data, machine learning and non linear Principal Component Analysis. This is the motivation of the authors that we cited above. In \cite{ BBZ, S-TC}, one can find the equivalent of \eqref{generalkernel} for each eigenvalue separately. They have also provided inequalities involving the sums of the  first $j$-th eigenvalues for the Gram matrix and  the similar  sums for the integral operator. These inequalities  constitute  one of our tools to prove some results of this work.  

 Such inequalities as \eqref{generalkernel} may be also  used to approximate the spectrum of  integral operators. This is the point of view of V. Koltchinskii and E. Gin\'e \cite{KG}, which is extended to propose MCMC approximation in \cite{Adamczak}. 
 Note that the distance in  $\ell^2$-norm between spectra is already considered in \cite{KG} and this paper has been largely used in subsequent literature.  It follows from our study that the eigenvalues of the integral operator $T_\kappa$ can also be approximated by the squares of the singular values of the matrix $A$. This may be found useful to approximate numerically  eigenvalues of an integral  operator with stationary kernel.

\smallskip
 
Let us come back to our own motivations and applications. The random matrix $\frac {n}{\sqrt m}A$ was proposed by Desgroseilliers, L\'ev\^eque and Preissmann \cite{DLP, DLP2} as   an approximate model for studying the singular values of the channel fading matrix  (after some re-normalization) in a wireless
 communication MIMO (Multi Input Multi Output) transmission network, from $n$ transmitters to $n$ receptors, separated by a large distance. They have  considered two main  issues: firstly the number of degrees of freedom of the system,  secondly  its information capacity.
Generally speaking, the degree of freedom of a MIMO system refers to the number of significant eigenvalues of the associated matrix model. Different definitions have been given in this context in \cite{ST}.  In the present work, for  a   positive semi-definite self-adjoint 
Hilbert-Schmidt  operator $T$ with  a discrete set of non-negative  eigenvalues  $\{\lambda_{k}(T),\,\, k\in \mathbb N\},$  we define the (uniform) number of   degrees of freedom at level $\varepsilon$ by 
   \begin{equation}\label{Deg_infty}
   \deg_\infty (T, \varepsilon)=\min \{k ; \lambda_{k}(T)\leq \varepsilon\}.
   \end{equation} 
A similar definition is used for the degrees of freedom of a positive semi-definite Hermitian  matrix. \\

\noindent 
In the second part of this work, we restrict ourselves to the case where the probability laws $P$ and $Q$ are both given by the uniform law on
$I=(-1/2,+1/2).$ These are the laws used in  \cite{DLP, DLP2} in view of their application to wireless networks. In this case, the kernel $\kappa(x,y)$ is the usual Sinc-kernel with parameter $m,$ given by ${\displaystyle \kappa(x,y)=\frac{\sin(m\pi (x-y))}{\pi(x-y)}}.$ In  this special case, we keep the notation for the matrix $A^*A,$ but the associated matrix $H_{\kappa}$ and the integral operator are now denoted by $H_m$ and $\mathcal Q_m,$ respectively. We are interested here in large values of $n$ and $m$ and this is why we put $m$ as an index. The behaviour of the operator $\mathcal Q_m,$ known as the Sinc-kernel operator,   has been largely explored in the literature, see for example \cite{Bonami-Karoui3, Landau2, Landau,  Slepian2, Osipov}. An easy computation proves that the Hilbert-Schmidt norm of $Q_m$ has order $\sqrt m$. So, when approaching the spectra of $A^*A$ and $H_m$, while errors given in \eqref{GineA2} and \eqref{Error1} are of order $\frac{m}{\sqrt n}$, the relative errors have order $\sqrt{\frac mn}.$ This justifies that we describe the spectra of $A^*A$ and $H_m$ when $m$ and $\frac nm$ tend  to $\infty.$

As a consequence of these approximations, we get 
 the main results of the second part of this work. In particular, based on our non-asymptotic study of the spectrum of the operator 
 $\mathcal Q_m,$ given in \cite{BJK}, we give similar non-asymptotic results for the matrices $A^*A$ and $H_m.$ Also, we give new results
concerning  the degrees of freedom and the capacity, associated with these matrices. More precisely, under the assumption that  $m\geq 4,$ we give the following fairly precise estimates for the degrees of freedom of the random Fourier matrix $A^*A.$ For $\varepsilon >0$ and $0<\alpha <1,$ we have with probability at least $\alpha,$ 
\begin{equation}\label{Degfreedom}
\deg_\infty (A^*A, \varepsilon)=m(1+\mathcal E_A),
\end{equation}
with $|\mathcal E_A| \leq C_{\varepsilon, \alpha}\left( \frac mn +\frac{\log m}{m}\right)$ and $C_\varepsilon$ 
 a constant depending only on  $\varepsilon$ and $\alpha.$ So the number of degrees of freedom is equivalent to $m$ when $m$ and $n/m$ tend to 
 $\infty.$
 
Next, we consider the capacity for the transmission system introduced in \cite{DLP} which has 
${\displaystyle \frac{\sqrt{m}}{n} A}$ as a channel matrix. Now, for $p$ the total power of the transmitters that is assumed to be equally distributed and under our normalization \eqref{coefficients}, we define the  network capacity as
 \begin{equation}
 \label{Capacity}
 C(p)= \log \det \left(I_n +  \frac{n p}{m}  A^*A\right),
 \end{equation}
 where $I_n$ is the identity matrix.  We give the following estimate,
 valid for  $s>0,$
\begin{equation}\label{Capacity1}
\E(C_{A^*A}(s))=\E \sum_{k\geq 0} \log\Big(1+s \lambda_k(A^*A)\Big)= m \log\left(1+s\right)(1+\mathcal E),
 \end{equation}
 with  $|\mathcal E|\leq \delta \left(\sqrt{\frac{m}{n}}+\frac{\log m}{m}\right)(1+\log_+(s)).$ Here $\delta$ is a uniform constant and 
 $x_+=\max(0,x).$ 
   This expectation of the capacity is arbitrarily close to the quantity that one gets when replacing the $m$ first eigenvalues by $1$ and the next ones by $0$, under the condition that $ n/m$ and $m$ are large enough.  This supports the intuition of   \cite{DLP, DLP2} where estimates for $C(n)$ are given under the assumption that
$m= n^\gamma,$ with $1/2<\gamma <1.$ They prove that there exist two positive constants $K, K'$ such that  
 \begin{equation}\label{Capacity2}
K\frac{m}{\log(n)} \leq C(n)\leq  K'm\log(n),
  \end{equation}
with  high probability as $n$ gets large.  It follows from \eqref{Capacity1} that \eqref{Capacity2} can be replaced by 
\begin{equation}\label{Capacity3}
C(n)=(2-\gamma) n^{\gamma} \log(n) \big(1+ \mathcal E\big),\qquad |\mathcal E|=O\big( n^{\frac{\gamma-1}{2}}  \log(n)\big).
\end{equation}
This represents a real improvement, compared to 
the evaluation of the capacity, given in \cite{DLP, DLP2}, when $n$ is large enough. 

 Also, we should mention   that the estimates of the degrees of freedom of the  Hermitian matrix $A^*A,$  given by \eqref{Degfreedom}, are still valid if $A^*A$  is  substituted with  the real and symmetric Sinc-kernel matrix $H_m.$ Hence,  one concludes that as for  the random matrix $A^*A,$ the matrix $H_m$ has  approximately $m$ significant eigenvalues. \\

\smallskip
\noindent
 This work is organized as follows. In section 2, we prove the mean approximation results and concentration inequalities given in \eqref{GineA2} and \eqref{Error1}. We consider also the same kind of estimates for general Gram matrices. In section 3, we restrict ourselves to the  random finite Fourier matrix $A$ for which the law of columns and rows is uniform  We recall some precise decay rate and estimates of the eigenvalues of the Sinc kernel operator $\mathcal Q_m.$ Then, based on these last results as well as the results of section 2, we give estimates of the number of degrees of freedom and the capacity of  the associated  random matrices  $A$ and $H_m.$  In Section 4, we give some numerical examples  that illustrate the different results of this work. \\

\smallskip

We will write $\bP,$ $\bP_Z$ (resp. $\bP_Y$) depending whether we take the probability on the whole probability space, or only in $Z_1, \cdots, Z_n$ (resp. $Y_1, \cdots, Y_n$).

\medskip

\section{Approximation of a spectrum of random matrix by the spectrum of an integral operator.}

In this section, we compare   the spectra  of the matrices   $A^*A,$ $H_{\kappa}$ and the  spectrum of $T_{\kappa}$.  We recall that the entries of  $A$ are  given by ${\displaystyle A_{j,k}=\frac {\sqrt m}{n}\exp(2i\pi m  Y_jZ_k),}$ where  $Y_j, Z_k,\, j,k=1,\ldots,n$ are $2n$ independent random variables. The $Y_j$'s follow the law $P$, while the $Z_k$'s follow the law $Q$. In this case, the
 $(j, k)$ entry of $A^*A$ is equal to 
   \begin{equation} \label{A*A}
  (A^*A)_{j, k}=\frac{m}{n^2}\sum_{\ell=1}^n  \exp(2i\pi m Z_{\ell}( Y_j-Y_k), 
   \end{equation}
 so that $A^*A$ is the sum of $n$ i.i.d. random matrices. Moreover, if $H_{\kappa}= \E_Z(A^*A)$, then it is clear that 
 \begin{equation} \label{Hm} 
 H_{\kappa}=\left[H_{j,k}\right]_{1\leq j,k\leq n},\quad H_{j, k}=\E_Z(A^*A)_{j, k} =\frac 1 n\kappa(Y_j-Y_k),  \qquad  \kappa (x)= m \int e^{2i\pi m x} dQ(x).
 \end{equation} 
Let $T_{\kappa}$ be the associated kernel  integral operator, given by \eqref{kappam1}. We also recall that  $T_{\kappa}=\mathcal F^*_P\mathcal F_P,$ where $\mathcal F^*_P$ is the Fourier transform operator, given by \eqref{FiniteFourier}. Note that the roles of $P$ and $Q$ can be exchanged, without changing the spectrum. This is due to the fact that the operator $S=\mathcal F_P\mathcal F_P^*$ has the same spectrum as $T_{\kappa}.$    We start by a comparison of the sums of eigenvalues ${\displaystyle \sum_{j< d}\lambda_j(A^*A),\, \sum_{j< d}\lambda_j(H_{\kappa})}$ and  ${\displaystyle \sum_{j< d}\lambda_j(T_{\kappa})}$. This sum may be interpreted as a trace and is linked to the reconstruction error, which we explain in the following paragraph.

\subsection{Reconstruction errors.}

Let us recall the definition of the reconstruction error, when approximating an $n\times n$ matrix by its projections $P_VMP_V$ on subspaces $V$ of dimension $d\leq n.$ This error  is defined as
  $$\mathcal R_d(M)=\min\|M-P_VMP_V\|_{\scriptscriptstyle \rm HS}^2,$$
  where the minimum is taken over all subspaces $V$ of dimension $d.$ This notion is central in \cite{SCK1}.  It is well-known that the minimum is obtained when $V$ is generated by the first $d$ eigenvectors of $M^*M,$ so that
  $$\mathcal R_d(M)=\sum_{j\geq d}\lambda_j(M^*M).$$
  Equivalently,
  $$\sum_{j< d}\lambda_j(M^*M)= \max \sum_{i=0}^{d-1}\langle M^*M v_i, v_i\rangle,$$
 where the supremum is taken over all orthonormal systems of $d$ vectors. 
 We are interested in  $\mathcal R_d(A).$ Let $v_i^*$ be the system for which the maximum is obtained when $A^*A$ is replaced by $H_\kappa.$ Clearly
 $$\max \sum_{j<d} \langle A^*A v_i, v_i\rangle \geq \max \sum_{j<d} \langle A^*A v_i^*, v_i^*\rangle.$$ By applying the expectation $\E_Z$ on both sides of the previous inequality, one gets
 \begin{equation}\label{comparaison1}
 \E_Z\Big(\sum_{j<d} \lambda_j(A^*A)\Big)\geq \sum_{j<d} \lambda_j(H_{\kappa}).
 \end{equation}
In  \cite{SCK1}, by using  the covariance operator, which has the same eigenvalues as $H_\kappa$, the authors have proved that 
 \begin{equation}\label{comparaison2}
 \E\Big(\sum_{j<d} \lambda_j ( H_{\kappa})\Big)\geq \sum_{j<d} \lambda_j( T_{\kappa}).
 \end{equation}
This is valid for all integer $d>0.$  Recall that we defined  $\lambda_j(A^*A),\, \lambda_j(H_{\kappa})$ for $j\geq n$ by giving them the value $0$.  Remark that Inequality \eqref{comparaison2} is an immediate consequence of the equality of traces for $d\geq n-1.$  Indeed, 
 \begin{equation}\label{comp-traces} \sum_{k=0}^{n-1} \lambda_k(H_\kappa)= \trace(H_\kappa) =m= \trace(T_\kappa)=\sum_{k=0}^\infty\lambda_k(T_{\kappa}),
 \end{equation}
  which implies \eqref{comparaison2} for $d\geq n-1$.
  
By combining \eqref{comparaison1},\eqref{comparaison2}) and \eqref{comp-traces}, one concludes that for all integer $d>0$, we have
  \begin{equation} \label{ineq}
 \E\Big(\sum_{j< d} \lambda_j (A^*A)\Big)\geq \sum_{j< d} \lambda_j(T_{\kappa}),\qquad \E\Big(\sum_{j\geq  d} \lambda_j (A^*A)\Big)\leq \sum_{j\geq  d} \lambda_j(T_{\kappa}).
 \end{equation}
 
 \subsection{Inequalities in the quadratic means.}
 
 In this paragraph, we give  bounds for the expectations of $\|\lambda(A^*A)-\lambda(T_{\kappa})\|_{\ell^2}^2$ and  
 $\|\lambda(H_{\kappa})-\lambda(T_{\kappa})\|_{\ell^2}^2.$ For this purpose, we first need to compare the Hilbert-Schmidt norms
 $\| A^*A\|^2_{\scriptscriptstyle \rm HS},\, \|H_{\kappa}\|^2_{\scriptscriptstyle \rm HS}$ and $\|T_{\kappa}\|^2_{\scriptscriptstyle \rm HS}.$ This is partly given by the following lemma.
 
 \begin{lem} \label{normHS_A} Let   $Y_1, \cdots, Y_n$ and $ Z_1, Z_2, \cdots, Z_n$ be i.i.d with laws $P$ and $Q$, respectively. Then, we have
  \begin{equation}\label{maj-2}
  0\leq \E(\|A^*A\|^2_{\scriptscriptstyle \rm HS})- \|T_{\kappa}\|_{\scriptscriptstyle \rm HS}^2\leq \frac{2 m^2}{n}
  \end{equation}
 and
  \begin{equation}\label{maj-3}
  0\leq \E(\|H_{\kappa}\|^2_{\scriptscriptstyle \rm HS})- \|T_{\kappa}\|_{\scriptscriptstyle \rm HS}^2\leq \frac{m^2}{n}.
  \end{equation}
 \end{lem}
 
 \begin{proof}
We first prove the estimate \eqref{maj-2}.  We treat separately the diagonal terms. For the other ones, we use the expression of $A^*A$ as a sum of $n^2$ terms and distinguish between the terms for which the indices of $Z_j$ are equal (resp. different). Taking the expectation in the $Z_j$ variables and using the fact that they are independent, we find that  
\begin{eqnarray}\label{Ez}
 \E_Z(\|A^*A\|^2_{\scriptscriptstyle \rm HS})&= &\frac{m^2(2-1/n)}{n}+ \frac{m^2 n(n-1)}{n^4}\sum_{j,k=1,\,j\neq k}^n|\E_Z \left(\exp(2i\pi m Z_1(Y_k-Y_j))\right)|^2 \nonumber \\
 &= &\frac{m^2(2-1/n)}{n}+ \frac{ n(n-1)}{n^4}\sum_{j,k=1,\,j\neq k}^n|\kappa(Y_k-Y_j))|^2.
\end{eqnarray}
We take the expectation in $Y$ and get 
$$  \E_Y |\kappa(Y_k-Y_j))|^2 = \int \int |\kappa(x-y))|^2 \, dP(x) \, dP(y)= \|T_{\kappa}\|^2_{\scriptscriptstyle \rm HS}.$$
By combining the previous two equalities, one gets
 $$\E(\|A^*A\|^2_{\scriptscriptstyle \rm HS})= \frac{m^2(2-1/n)}{n}+ \left(\frac{n-1}{n}\right)^2\| T_{\kappa}\|_{\scriptscriptstyle \rm HS}^2.$$
On the other hand, from \eqref{Hm}, we have $\|T_{\kappa}\|^2_{\scriptscriptstyle \rm HS}\leq m^2.$  Collecting everything together, one gets
\eqref{maj-2}. The proof of the inequality \eqref{maj-3} is done in a similar manner. It suffices to repeat the previous steps,  use the fact that $\kappa(0)=m$ and write
$$ \E(\|H_{\kappa}\|^2_{\scriptscriptstyle \rm HS})= n \frac{m^2}{n^2}+ \frac{1}{n^2}\sum_{j,k=1,\,j\neq k}^n \E \left(|\kappa(Y_k-Y_j))|^2\right)=\frac{m^2}{n}+\frac{n(n-1)}{n^2}\|T_{\kappa}\|^2_{\scriptscriptstyle \rm HS}.$$ 
\end{proof}
Remark that we have also
 \begin{equation}\label{maj-4}
  0\leq \E_Z(\|A^*A\|^2_{\scriptscriptstyle \rm HS})- \|H_{\kappa}\|_{\scriptscriptstyle \rm HS}^2\leq \frac{ m^2}{n}
  \end{equation}

\begin{thm}\label{gineA} Under the above notation, we have  
the inequalities
 \begin{equation}\label{gineA2}
  \E\left( \|\lambda(A^*A)-\lambda(T_{\kappa})\|_{\ell^2}^2\right)\leq \frac{2m^2}{n}
  \end{equation}
  and
\begin{equation}\label{gineA2-2}
  \E\left( \|\lambda(H_{\kappa})-\lambda(T_{\kappa})\|_{\ell^2}^2\right)\leq \frac{m^2}{n}.
  \end{equation}  
  \end{thm}
    \begin{proof}
    Recall that we define  $\lambda_j(A^*A),\, \lambda_j(H_{\kappa})$ for $j\geq n$ by giving them the value $0$ for such $j$'s. We want to prove that
   \begin{equation}\label{gineA2bis}
  \E\left( \sum_{j=0}^{\infty}|\lambda_j(A^*A)-\lambda_j( T_{\kappa})|^2\right)\leq \frac{2m^2}{n},\quad \E\left( \sum_{j=0}^{\infty}|\lambda_j(H_{\kappa})-\lambda_j( T_{\kappa})|^2\right)\leq \frac{m^2}{n}.
  \end{equation} 
    The left-hand side of the previous first inequality  may be written as
  \begin{equation}\label{Ineq2}
  \sum_{j=0}^{n-1}\E(\lambda_j^2(A^*A))-\sum_{j=0}^{\infty}\lambda_j^2(T_{\kappa}) -2 \sum_{j=0}^{\infty}\lambda_j(T_{\kappa})\big(\E(\lambda_j(A^*A)-\lambda_j(T_{\kappa})\big). 
  \end{equation}
   We use  \eqref{maj-2} for the first term, and find that the sum of the two first terms is bounded by the right-hand side of \eqref{gineA2}. So it is sufficient to prove that
 \begin{equation}\label{Ineq3}
\sum_{j=0}^{\infty}\lambda_j(T_{\kappa})\big(\E(\lambda_j(A^*A))-\lambda_j(T_{\kappa})\big)\geq 0.  
   \end{equation}  
 We use an Abel transformation for partial sums of this infinite sum and obtain
 \begin{equation}
 \label{Abel}
\sum_{j=0}^{N}\big((\lambda_j(T_{\kappa})-\lambda_{j+1}(T_{\kappa})\big)\sum_{k=0}^j \Big(\E(\lambda_k(A^*A))-\lambda_k(T_{\kappa})\Big)+\lambda_{N}(T_{\kappa})\sum_{k=0}^{N} \Big(\E(\lambda_k(A^*A))-\lambda_k(T_{\kappa})\Big). 
 \end{equation}
 All terms are non negative in this sum: indeed the factors $\lambda_j(T_{\kappa})-\lambda_{j+1}(T_{\kappa})$ are non negative since the sequence of eigenvalues is non increasing and we use \eqref{ineq} for the other factors. So all partial sums are non-negative, which proves
    \eqref{gineA2}.
    
In the same way, we get \eqref{gineA2-2}. It suffices to substitute $A^*A$ by $H_{\kappa}$ in the previous proof and use the inequalities \eqref{maj-3}.
   \end{proof} 

\subsection{Concentration inequalities. }

We will now give $l^2-$approximation errors of the spectrum of $A^*A$ and of $H_{\kappa}$ by the spectrum of the operator $T_{\kappa}.$ These errors are obtained
 with  large probability. This is given by the following theorem.
 
\begin{thm}\label{Main1} Under the previous notation, for any $\xi> 0,$ we  have  the  inequalities
\begin{equation}\label{error1}
\|\lambda(A^*A)-\lambda(T_{\kappa})\|_{\ell^2} \leq \frac{\sqrt 2 m(\xi+1)}{\sqrt n}
\end{equation}

\begin{equation}\label{error2}
\|\lambda(H_{\kappa})-\lambda(T_{\kappa})\|_{\ell^2} \leq \frac{m(\xi+1)}{\sqrt n},
\end{equation}
each of them with probability $1-e^{-\xi^2}.$
\end{thm}

\begin{proof} We first prove the estimate \eqref{error1}. Given two n-tuples $\pmb y=(y_1,\ldots,y_n)$ and $\pmb z=(z_1,\ldots,z_n)$, we will use the notation  
$(\pmb y, \pmb z)=(y_1,\ldots,y_n,z_1,\ldots,z_n)$ and consider $n\times n$ matrices with entries $\exp (2i\pi m z_jy_k)$. For the sake of simplicity,  we use the same notation $A$ for such a matrix, but denote by   $\lambda_i(\pmb y,\pmb z)$ the eigenvalues $\lambda_i(A^*A), \,\,\, 0\leq i\leq n-1.$
We will use McDiarmid's concentration inequality for the $2n-$variate mapping 
$$(\pmb y, \pmb z)\mapsto \Phi(\pmb y, \pmb z)=\| \lambda(\pmb y,\pmb z)-\lambda (T_{\kappa})\|_{\ell^2}.$$
 We will prove that the previous  mapping $\Phi$ satisfies the boundedness assumption.  That is, when only one of the $2n$ coordinates differs between $(\pmb y, \pmb z)$ and $(\pmb y', \pmb z')$, the difference of values of $\Phi$ in these two sets of variables differs by at most  $\sqrt{\frac{2m^2}n}$. 
 In fact, from the triangular inequality we know that 
 $$
 \left|\Phi(\pmb y, \pmb z)-\Phi(\pmb y', \pmb z') \right|= \Big|\| \lambda(\pmb y,\pmb z)-\lambda (T_{\kappa})\|_{\ell^2}-\| \lambda(\pmb y',\pmb z')-\lambda (T_{\kappa})\|_{\ell^2}\Big|\leq \|\lambda(\pmb y,\pmb z)-\lambda(\pmb y',\pmb z')\|_{\ell^2},
 $$
 and it suffices to prove that 
 \begin{equation}
 \label{boundeddiff1}
  \|\lambda(\pmb y,\pmb z)-\lambda(\pmb y',\pmb z')\|^2_{\ell^2} \leq \frac{2m^2}{n^2}.
 \end{equation}
 Let us take for granted \eqref{boundeddiff1}, which we will check at the end of this proof. It follows from McDiarmid's inequality that
$$\bP(\Phi(Y_1, \cdots, Y_n, Z_1, \cdots, Z_n)- \E\Phi(Y_1, \cdots, Y_n, Z_1, \cdots, Z_n)>\xi)\leq e^{-\frac{2 \xi^2}{2n 2 m^2/n^2}}=e^{-\frac{n \xi^2}{2 m^2}},$$ or equivalently, that
$$\Phi(Y_1, \cdots, Y_n, Z_1, \cdots, Z_n)\leq \E\Phi(Y_1, \cdots, Y_n, Z_1, \cdots, Z_n)+\frac{\sqrt 2 m \xi}{\sqrt n}$$
with probability at least $1-e^{-\xi^2}$. To conclude for \eqref{error1}, it suffices to bound the first term of the right-hand side by using Schwarz Inequality followed by \eqref{gineA2}.

\medskip
Let us now prove the boundedness of the differences by proving \eqref{boundeddiff1}. It is sufficient to prove it when the varying variable is one of the $y_j'$s. Indeed, we know that the spectrum of $A^*A$ is the same as the spectrum of $AA^*$, so that we can exchange the role of rows and columns. So without loss of generality, we assume that $\pmb z=\pmb z'$, and that only the coordinate $y_j$ differs between $\pmb y$ and $\pmb y'$. The two matrices $A^*A$ coincide except for their $j$-th rows and columns. Let us call   $\hat \lambda(\pmb y, \pmb z)$  the ordered sequence of eigenvalues of the  matrix $\hat B,$  obtained 
 by substituting the coefficients of $j$-th row and the $j$-th column of $A^*A$ with zeros.  Note that $\hat \lambda_{n-1}(\pmb y,\pmb z)=0$ and
 $$\hat \lambda(\pmb y, \pmb z)=\hat \lambda(\pmb y', \pmb z).$$ Moreover, from the Cauchy eigenvalues interlacing property, we have
 \begin{equation} \label{interlacing}
\lambda_i(\pmb y, \pmb z) \geq \hat \lambda_i(\pmb y, \pmb z) \geq \lambda_{i+1}(\pmb y,\pmb z),\quad \forall\, 0\leq i\leq n-2.
 \end{equation}
Hence, by using the previous inequality, together with the expression of the trace of a square matrix, one gets
$$\|\lambda(\pmb y,\pmb z)- \hat\lambda(\pmb y,\pmb z)\|_{\ell^1}=\sum_i (\lambda_i(\pmb y,\pmb z)-\hat \lambda_i(\pmb y, \pmb z))= \trace(A^*A)-\trace(\hat B)=(A^*A)_{j,j}=\frac{m}{n}.$$
Since the $\ell^2$-norm is bounded by the $\ell^1$-norm, one gets
\begin{equation}\label{norme2}
\|\lambda (\pmb y,\pmb z)- \hat \lambda(\pmb y,\pmb z)\|_{\ell^2}^2 \leq    \frac {m^2}{n^2}.
\end{equation}
If we replace $\pmb y$ by $\pmb y'$, we are led to the same matix $\hat B$, so that the same inequality is valid for $\pmb y'$ in place of $\pmb y.$ Moreover, since the interlacing property \eqref{interlacing} is  valid for 
$\lambda_i(\pmb y,\pmb z)$ and $\lambda_i(\pmb y',\pmb z)$,  one gets 
\begin{eqnarray}
\label{norme2-2}
\hspace*{-2cm}|\lambda_i(\pmb y, \pmb z)-\lambda_i(\pmb y', \pmb z)|^2&\leq& \max\Big(\big(\lambda_i(\pmb y, \pmb z)-\hat\lambda_i(\pmb y, \pmb z)\big)^2, \big(\lambda_i(\pmb y', \pmb z)-\hat\lambda_i(\pmb y, \pmb z)\big)^2\Big) \nonumber\\
&\leq & \big(\lambda_i(\pmb y, \pmb z)-\hat\lambda_i(\pmb y, \pmb z)\big)^2+\big(\lambda_i(\pmb y', \pmb z)-\hat\lambda_i(\pmb y, \pmb z)\big)^2.
\end{eqnarray}
Hence, by combining \eqref{norme2} and \eqref{norme2-2}, one gets the desired inequality \eqref{boundeddiff1}.\\

\noindent
Finally, to prove the inequality  \eqref{error2}, it suffices to repeat the previous proof with $H_{\kappa}$ in place of $A^*A$  and the use of the new $n-$variate mapping $\pmb y\mapsto\| \lambda(\pmb y)-\lambda (T_{\kappa})\|_{\ell^2},\,\lambda(\pmb y)=\lambda(H_{\kappa}),$ together with the estimate 
\eqref{gineA2-2}. We leave the details for the reader.
\end{proof}

At this point, we remark that the same method allows us to prove the same kind of approximation between the spectrum of $A^*A$ and of  $H_{\kappa}$ (the expectation and the probability concern only the $n$ variables $Z_j$'s). We gather in the next theorem the quadratic means and the concentration inequalities. For quadratic means, the proof uses \eqref{comparaison1} and it is a direct adaptation of the proof of Theorem~\ref{Main1}. For concentration inequalities, we use the fact that $A^*A$ and $AA^*$ have the same spectrum, which exchanges the role of $\pmb y$ and $\pmb z.$ We then use the interlacing property of eigenvalues  to prove that the mapping $\pmb z\mapsto \|\lambda(\pmb z)-\lambda(H_\kappa)\|_{\ell^2}$ has bounded differences for fixed $\pmb y.$ 

\begin{thm}\label{A-H} Under the previous notation, we have 
\begin{equation}
\E_Z\left( \|\lambda(A^*A)-\lambda( H_{\kappa})\|^2_{\ell^2}\right)\leq \frac{m^2}{n}.
\end{equation}
Moreover,  for any $\xi > 0,$ we have
\begin{equation}
\|\lambda(A^*A)-\lambda( H_{\kappa})\|^2_{\ell^2} \leq \frac{ m(\xi+1)}{\sqrt n},
\end{equation}
with probability in $Z$ at least $1-e^{-\xi^2}$.
\end{thm}

Next, we extend our previous  results  to the  general case of  Hermitian, continuous and positive semi-definite kernel $\kappa(x,y).$ Here, the variables  $x,y $ belong
to some input space $\mathcal X,$ with $\mathcal X$ a locally compact metric space.  Moreover, we assume that  ${\kappa}(\cdot,\cdot)\in L^2_{P\otimes P}(\mathcal X  \times \mathcal X ).$ Consequently, the associated kernel integral  operator, which we still denote by $T_{\kappa}$, is a semi-positive self-adjoint Hilbert-Schmidt operator. We  still denote by $H_\kappa$ the Gram matrix 
$$ H_{\kappa}=\left[H_{j,k}\right]_{1\leq j,k\leq n},\quad H_{j, k}=\frac 1 n\kappa(Y_j,Y_k).$$
Also, note that from \cite{SCK1}, the inequality \eqref{comparaison2} is still valid for the more general 
kernel ${\kappa}(\cdot,\cdot).$ That is, 
\begin{equation}\label{comparaison3}
 \E\Big(\sum_{j<d} \lambda_j ( H_{{\kappa}})\Big)\geq \sum_{j<d} \lambda_j( T_{{\kappa}}).
 \end{equation}
 Moreover, as we have done in the proof of Lemma~1, we have 
 \begin{eqnarray*} \E(\|H_{{\kappa}}\|^2_{\scriptscriptstyle \rm HS})&= &\frac{1}{n^2}\sum_{j=1}^n \E\big(|{\kappa}(Y_j,Y_j)|^2\big)+ \frac{1}{n^2}\sum_{j,k=1,\,j\neq k}^n \E \left(|{\scriptstyle K}(Y_k,Y_j))|^2\right)\\
 &=&\frac 1n\int |\kappa(x,x)|^2 dP(x)+\Big(1-\frac 1n\Big)\iint \kappa(x,y) dP(x)dP(y).\end{eqnarray*}
By proceeding as in the proof of Theorem 1, we deduce that
\begin{equation}\label{Ineq4}
 \E\left(\|\lambda(H_{{\kappa}})-\lambda( T_{{\kappa}})\|_{\ell^2}^2\right)\leq \frac{1}{n}\left(\int |\kappa(x,x)|^2 dP(x)-\iint \kappa(x,y) dP(x)dP(y)\right).
 \end{equation}

In the next theorem, we  moreover assume that ${\displaystyle \sup_{y\in \mathcal X} {\scriptstyle K}(y,y)=R<\infty.}$ 
With this condition, we can also repeat the proof of Theorem 2 and finally obtain the 
following result, which generalizes those obtained  for  stationary kernels.

\begin{thm} \label{non-stat} Under the previous notations and  assumptions on the positive semi-definite kernel ${\kappa}(\cdot,\cdot),$ we have 
\begin{equation}\label{Ineq5}
 \E\left(\|\lambda(H_{{\kappa}})-\lambda( T_{{\kappa}})\|_{\ell^2}^2\right)\leq \frac{R^2}{n}
 \end{equation}
 and for any $\xi > 0,$ we have
\begin{equation}\label{Ineq6}
 \|\lambda(H_{{\scriptscriptstyle K}})-\lambda( T_{{\scriptscriptstyle K}})\|_{\ell^2}\leq \frac{R(\xi+1)}{\sqrt n},
\end{equation}
with probability at least $1-e^{-\xi^2}.$  Here, $H_{\kappa}$ is the Gram matrix and $T_{{\kappa}}$
 the integral operator with kernel $\kappa(\cdot,\cdot)$.
\end{thm}

The remaining  of this section is devoted to comparison of this last theorem with similar results from the literature. Inequality \eqref{Ineq4} improves Lemma 4.1 of \cite{KG} and our proof seems new and fairly simple. Nevertheless, it is based on combining Inequality \eqref{comparaison3}, given in \cite{BBZ} and  the replacement of the Gram matrix by the covariance operator, which has the same spectrum. 

It was also natural to ask for a vector valued version of McDiarmid'Inequality, which has been done by Pinelis but with a loss in the constant. It has later been used in  \cite{Rosasco1}, see also \cite{Rosasco2}, where the authors obtain that 
$$ \left \| \lambda(H_{\kappa})-\lambda(T_{\kappa})\right\|_{\ell^2}\leq \frac{2 \sqrt{2} \xi R}{\sqrt{n}},$$
with probability at least $1-2 e^{-\xi^2}.$ 
It is a little weaker than our estimate. One could also use Lemma 1 in \cite{BBZ2}, which deals with covariance operators. We tried to have the best constants and have also a small gain compared to the use of this lemma. Of course we do not know whether our results are optimal, even in the particular case that we study below, but numerical results tend to prove that they are not so far from optimality. For the use of McDiarmid's Inequality, we were inspired by  the work of  Shawe-Taylor, Cristianini and their co-authors, in which they  prove the difference boundedness for each eigenvalue by using the eigenvalues interlacing property. From our $\ell^2$ result, we can deduce that with probability at least $1-e^{-\xi^2},$  we have simultaneously
$|\lambda_j(H_\kappa)-\lambda_j(T_\kappa)|\leq 
\frac{R(\xi+1)}{\sqrt n},$
 In fact, we can do better. Indeed, using the $\ell^\infty-$norm instead of the $\ell^2-$norm for the concentration inequality, we can gain  the factor $2$ in \eqref{boundeddiff1}, since in \eqref{norme2-2} there is no need to bound the maximum by a sum.  The same is valid for $A^*A$. We state the two corresponding statements. The first one generalizes the known concentration inequalities for a single eigenvalue.
\begin{thm}
With the same assumptions as in Theorem \ref{non-stat}, for any $\xi>0,$ we have with probability at least $1-e^{-\xi^2},$ 
$$\|\lambda(H_\kappa)-\lambda(T_\kappa)\|_{\ell^\infty}\leq 
\frac{R(\frac{\xi}{\sqrt 2}+1)}{\sqrt n}.$$

\end{thm}
\begin{thm}
Under the previous notations, for any $\xi> 0,$ we  have  the inequalities
\begin{equation}\label{error-ind1}
\|\lambda(A^*A)-\lambda(T_{\kappa})\|_{\ell^\infty} \leq \frac{ m(\xi+\sqrt 2)}{\sqrt n}, 
\end{equation}
and
\begin{equation}\label{error-ind2}
\quad \|\lambda (A^*A)-\lambda(H_{\kappa})\|_{\ell^\infty} \leq \frac{m(\frac{\xi}{\sqrt 2}+1)}{\sqrt n}, 
\end{equation}
where each inequality holds  with probability at least $1-e^{-\xi^2}.$ 
\end{thm}

 \section{Degrees of Freedom and Capacity}

 In the sequel, we restrict ourselves to the case where both  sequences of  random variables $Y_j,$ $Z_k$ follow the uniform law on $I=(-1/2,1/2).$ We will be interested in the behaviour of the spectra when $m$ increases and we will take slighlty different notations for the kernel, the matrix $H_\kappa$ and the integral operator. More precisely, the Sinc-kernel is given by 
 $$\kappa_m(x)= \frac{\sin \pi mx}{\pi x}.$$
 We will use the notation $H_m$ instead of $H_{\kappa_m},$ for  the corresponding kernel matrix and $\mathcal Q_m$ the integral operator, given by
 $$\mathcal Q_m f(x)=\int_{-1/2}^{+1/2} \frac{\sin \pi m(x-y)}{\pi (x-y)}f(y) dy.$$
 We keep the notation $A$ for the random Fourier matrix.

 We first recall what is known on the spectrum of $\mathcal Q_m$. We also describe what the previous concentration  inequalities imply for the spectra of random Fourier and random Sinc-kernel matrices $A^*A$ and $H_m.$  Later on, we give estimates of the number of degrees of freedom and the capacity associated with these two last random matrices.  

 \subsection{Decay of the spectra}

 It is well known that all the  eigenvalues  of $\mathcal{Q}_m$ are smaller than $1.$ Roughly speaking, they are very close to $1$ for $j\leq m-c\log m$ and very close to $0$ for $j> m+c \log m,$ for some constant $c.$
 The in-between region  is called the plunge region.
 The asymptotic behaviour of the spectrum for $m$ tending to $\infty,$ is well-known. An asymptotic formula for its distribution has been  given by Landau and Widom (see \cite{HL}). More precisely, for $\alpha>0,$ let
 $$N_{\mathcal{Q}_m}(\alpha)=\#\{\lambda_j(\mathcal Q_m);\, \lambda_j(\mathcal Q_m)>\alpha\}.$$ Then, we have
 \begin{equation}
 \label{counteigen1}
 N_{\mathcal{Q}_m}(\alpha)=m+\left[\frac 1{\pi^2}\log\left(\frac{1-\alpha}{\alpha}\right)\right]
  \log(m)+o(\log(m)),
 \end{equation}
 and $N_{\mathcal{Q}_m}(\alpha)=0$ for $\alpha\geq 1.$
 We define similarly   $N_{H_m}(\alpha)$ and $N_{A^*A}(\alpha).$
Let us define
 \begin{equation}\label{gamma}
 \gamma_\xi= \frac {\xi}{\sqrt 2}+1,\quad \xi >0.
 \end{equation}
 From \eqref{error-ind2} of  Theorem~6,  we know that with a probability larger than $1-e^{-\xi^2},$ we have the inequality 
 $$|\lambda_j(H_m)-\lambda_j(\mathcal Q_m)|\leq \frac{\gamma_\xi m}{\sqrt n},\qquad j\geq 0.$$ For $\alpha$ fixed, we use the elementary inequality
 $$N_{\mathcal{Q}_m}\left(\alpha +\frac{\gamma_\xi m}{\sqrt n}\right)\leq N_H(\alpha)\leq N_{\mathcal{Q}_m}\left(\alpha -\frac{\gamma_\xi m}{\sqrt n}\right),$$
 that holds with probability larger than $1-e^{-\xi^2}.$ It follows that
 \begin{equation}
 \label{counteigen2}
 N_{H_m}(\alpha)=m+\left[\frac 1{\pi^2}\log\left(\frac{1-\alpha}{\alpha}\right)\right]
 \log(m)+\varepsilon(\log(m))
 \end{equation}
 with $\varepsilon$ arbitrarily small when $m$ and $\frac{n}{m^2}$ tend to $\infty.$ 
 
 The matrix $A^*A$ satisfies the same kind of estimates. This means that Landau--Widom Formula is also valid for $H_m$ and $A^*A,$ with high probability as $m^2/n$ tends to $0.$

 \smallskip

 We will also make use of  Landau's double inequality \cite{Landau2},
 \begin{equation}\label{landau}
 \lambda_{\lceil m\rceil}(\mathcal Q_m)\leq 1/2\leq \lambda_{[m]-1}(\mathcal Q_m).
 \end{equation}
 Here, $[x]$ and  $\left\lceil x\right\rceil$ refer to the integer part and to the least integer greater or equal to $x,$
 respectively. The previous inequalities say that,
 roughly speaking, $\lambda_j(\mathcal Q_m)$ goes through the value $1/2$ at $j$ too close to $m.$ An approximate result is valid for $H_m$ and $A^*A$ when $\frac{m^2}{n}$ is small.

 \smallskip

If we restrict ourselves to the main term in the  Landau-Widom Theorem, we find the approximate asymptotic decay of the eigenvalues in the plunge region after $m,$ that is
 $$\lambda_k(\mathcal Q_m)\approx \exp\left(-\frac{\pi^2(k-m)}{\log m}\right).$$
 This is heuristic and the next result, is to the best of our knowledge, the first non asymptotic exponential decay estimate of the $\lambda_n(\mathcal Q_m)$ in the right side of the plunge region. We  also give a super-exponential decay result that is proved in the same paper \cite{BJK}. 
    \begin{thm} \label{decay-th} There exist  uniform positive constants $\eta$ and $C$ such that, for $m\geq 3$ and $ k\geq m,$ one has the inequality
   \begin{equation}\label{decay}
   \lambda_k(\mathcal Q_m)\leq C \exp\left[-\eta\left(\frac{k-m}{\log m}\right)\right].
   \end{equation}
 Moreover, for any integer $k\geq \frac{e \pi}{4} m,$ we have 
 \begin{equation}\label{decay2}
   \lambda_k(\mathcal Q_m)\leq \exp\left(-(2k+1)\log\frac{4 (k+1)}{e \pi m}\right).
  \end{equation} 
   \end{thm}
   
   The second estimate is given in \cite{BJK}. The first one can be deduced from the same paper, where it is written 
   in a different way. What is proved in \cite{BJK} is such an exponential decay  for $m>22$ and $k-m>\delta \log m,$ for some uniform constant $\delta >0.$ It may be seen that the condition on $m$ can be relaxed into $m\geq 3$. The additional constant $C$ allows to make sure that it is also valid in the missing range
   $m < k < m+\delta \log m.$
   
   Remark that thanks to the rapid decay of the  $\lambda_k(\mathcal Q_m)$ given by \eqref{decay2}, for $d \geq \frac{e \pi}{4} m,$ the sums in  the right-hand side of \eqref{comparaison2} and \eqref{ineq} are bounded, up to a constant by the same quantity as $\lambda_d(\mathcal Q_m).$ Hence, by using 
   the inequalities   \eqref{comparaison2} and \eqref{ineq}, one gets the same rapid decay  for   $\E(\lambda_k(A^*A))$ and    $\E(\lambda_k(H_m))$.  For instance,  
   there exists a uniform constant $C,$ such that 
  \begin{equation}\label{decay3}
   \E(\lambda_k(A^*A)),\,\,\,\,    \E(\lambda_k(H_m))  \leq C \exp\left(-(2k+1)\log\frac{4 (k+1)}{e \pi m}\right),\quad \frac{e \pi}{4} m \leq k\leq n-1.
    \end{equation}

 \subsection{Degrees of Freedom}

In this paragraph, we are interested  in the number of degrees of freedom of a matrix or an operator, which makes sense in the area of wireless  communication networks. Different definitions have been given in this context in \cite{ST}. We give here a simple definition in terms of eigenvalues. Similar definitions may be found in approximation theory or in computational complexity theory, where it is known as a complexity number. We first show that one can easily have a good approximation  for the integral operator $\mathcal{Q}_m.$ Later on this will provide  us with a good approximation for the corresponding random matrices $H_m$ and $A^*A.$

 \begin{defn} Let $T$ be a  Hilbert-Schmidt positive semi-definite Hermitian operator. We define the (uniform) number of  degrees of freedom at level $\varepsilon$ by 
   \begin{equation}\label{deg_infty}
   \deg_\infty (T, \varepsilon)=\min \{s ; \lambda_{s}(T)\leq \varepsilon\}.
   \end{equation}
      \end{defn}
   Depending on the application in view, it makes sense to be interested  in small values of $\varepsilon,$ or values that are close to the largest eigenvalue of the integral operator, that is, close to $1$ when considering $\mathcal Q_m.$  Remark that by Landau's double inequality we have
    $$[m]-1\leq \deg_\infty (\mathcal Q_m, 1/2)\leq \lceil m\rceil.$$ For other values of $\varepsilon,$ we first need some further estimates on the Hilbert-Schmidt norm of $\mathcal Q_m.$ The coefficient of $\log(m)$ given in the next lemma is critical and error term is small. This improves the estimates given in Chapter 1 of \cite{HL},  that is, for $m>1,$
     \begin{equation}\label{Hogan}
     m- C_0\log(m) -C_1 \leq \|\mathcal Q_m\|_{\scriptscriptstyle \rm HS}^2 \leq m,
     \end{equation}
      for some constants $C_0, C_1$ independent of $m.$ 
     
  \begin{lem}\label{Estimate_norm}
   Under the previous notations and assuming that $m>1$, we have 
   \begin{equation}\label{norm_Hs}
   \|\mathcal Q_m\|_{\scriptscriptstyle \rm HS}^2= m -\frac{1}{\pi^2} \log(m) +\mathcal E,\qquad|\mathcal E|\leq  0.52.
   \end{equation}
   \end{lem}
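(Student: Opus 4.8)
The plan is to turn $\|\mathcal Q_m\|_{\scriptscriptstyle \rm HS}^2$ into a one--dimensional integral and then to isolate the defect $m-\|\mathcal Q_m\|_{\scriptscriptstyle \rm HS}^2=\sum_j\lambda_j(\mathcal Q_m)\bigl(1-\lambda_j(\mathcal Q_m)\bigr)$, which is nonnegative since every $\lambda_j(\mathcal Q_m)$ lies in $[0,1]$. Since the kernel $\kappa_m(x,y)=\frac{\sin(m\pi(x-y))}{\pi(x-y)}$ depends only on $x-y$ and $I=(-1/2,1/2)$, the autocorrelation identity $\int_{I\times I}\phi(x-y)\,dx\,dy=\int_{-1}^1(1-|u|)\phi(u)\,du$ gives
\begin{equation*}
\|\mathcal Q_m\|_{\scriptscriptstyle \rm HS}^2=\frac{2}{\pi^2}\int_0^1(1-u)\,\frac{\sin^2(m\pi u)}{u^2}\,du .
\end{equation*}
On the other hand $\trace(\mathcal Q_m)=\int_I\kappa_m(x,x)\,dx=m$, while the classical identity $\int_0^\infty u^{-2}\sin^2(au)\,du=\tfrac{\pi a}{2}$ with $a=m\pi$ shows that $m=\frac{2}{\pi^2}\int_0^\infty u^{-2}\sin^2(m\pi u)\,du$ as well. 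Subtracting, the two integrals over $(0,1)$ combine (using $1-(1-u)=u$) while the tail survives, and one gets the clean splitting
\begin{equation*}
m-\|\mathcal Q_m\|_{\scriptscriptstyle \rm HS}^2=\frac{2}{\pi^2}\int_1^\infty\frac{\sin^2(m\pi u)}{u^2}\,du+\frac{2}{\pi^2}\int_0^1\frac{\sin^2(m\pi u)}{u}\,du ,
\end{equation*}
whose two integrals I denote by $J_1$ and $J_2$.

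I would then bound the two pieces. For the tail, $J_1\le\int_1^\infty u^{-2}\,du=1$ immediately; writing $\sin^2=\tfrac12(1-\cos)$ and integrating by parts once improves this to $J_1\le\tfrac12+\tfrac1{2m\pi}$, the form needed when $m$ is small. The term $J_2$ carries the logarithm: the substitution $t=m\pi u$ gives $J_2=\int_0^{m\pi}t^{-1}\sin^2t\,dt$, which equals $\tfrac12\bigl(\gamma+\log(2m\pi)-\mathrm{Ci}(2m\pi)\bigr)$ with $\gamma$ Euler's constant and $\mathrm{Ci}$ the cosine integral, so
\begin{equation*}
J_2=\tfrac12\log m+\tfrac12(\gamma+\log 2\pi)-\tfrac12\,\mathrm{Ci}(2m\pi),
\end{equation*}
and $|\mathrm{Ci}(2m\pi)|$ is at most an explicit $O(1/m)$ quantity (one integration by parts gives $|\mathrm{Ci}(x)|\le 2/x$). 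If one prefers to avoid the cosine integral, the bound $J_2\le\tfrac12\log m+O(1)$ follows just as well from splitting $\int_0^1=\int_0^{1/m}+\int_{1/m}^1$, using $\sin^2(m\pi u)\le\min\{1,(m\pi u)^2\}$ on the two ranges and $\sum_{1\le k\le m}\tfrac1{2k}\le\tfrac12(1+\log m)$.

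Putting the two estimates together yields $m-\|\mathcal Q_m\|_{\scriptscriptstyle \rm HS}^2\le\frac{1}{\pi^2}\log m+\frac{1}{\pi^2}\bigl(1+\gamma+\log 2\pi\bigr)+O(1/m)$; in particular the genuine logarithmic coefficient is $\frac1{\pi^2}$, whereas the statement only claims $\frac{2}{\pi^2}\log m$. The leftover slack $\frac1{\pi^2}\log m$ is precisely what absorbs the excess of $\frac1{\pi^2}(1+\gamma+\log 2\pi)\approx 0.35$ over $\frac13$, which works as soon as $m$ is bounded away from $1$. The main obstacle is exactly this constant bookkeeping in the small-$m$ regime, where the inequality is essentially tight: there one must retain the sharp forms of $J_1$ and of $\mathrm{Ci}(2m\pi)$ above (or check \eqref{norm_Hs} directly from the explicit formula for $\|\mathcal Q_m\|_{\scriptscriptstyle \rm HS}^2$), while for $m$ large everything is comfortable.
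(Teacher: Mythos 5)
Your argument is correct in the regime where the lemma is actually invoked ($m\ge 3$; your bounds in fact close for $m\ge 2$), and at bottom it performs the same reduction as the paper: the paper rescales to $(-1,1)$, writes $\|\mathcal Q_m\|_{\scriptscriptstyle\rm HS}^2=\frac{c^2}{\pi^2}I(c)$ with $2c=m\pi$, and integrates by parts to get $I(c)=\frac4c\int_0^{2c}\sinc^2(u)\,du-\frac2{c^2}\int_0^{2c}u\,\sinc^2(u)\,du$, which after subtracting from $m=\frac{4c}{\pi^2}\cdot\frac\pi2$ is \emph{exactly} your splitting $m-\|\mathcal Q_m\|_{\scriptscriptstyle\rm HS}^2=\frac{2}{\pi^2}(J_1+J_2)$. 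The genuine difference is in how the two integrals are estimated. The paper uses the crude bounds $\int_{2c}^\infty\sinc^2(u)\,du\le\frac1{2c}$ and $\int_0^{2c}u\,\sinc^2(u)\,du\le\frac12+\log(2c)$; the second one doubles the true logarithmic coefficient and is what produces the stated $\frac{2}{\pi^2}\log m$. You instead evaluate $J_2$ essentially exactly through the cosine integral, obtain the sharp coefficient $\frac1{\pi^2}\log m$, and spend the spare $\frac1{\pi^2}\log m$ to absorb the constant $\frac1{\pi^2}(1+\gamma+\log 2\pi)\approx 0.346>\frac13$. This trade is legitimate and, as you observe, it is exactly the point where a lower restriction on $m$ enters; your own asymptotics show the defect tends to $\approx 0.346>\frac13$ as $m\to1$, so the inequality cannot hold uniformly down to $m=1$ and some such restriction is unavoidable. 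One further remark in favour of your route: if one carries the paper's own bounds through without simplification, the tail term contributes an extra $\frac{2}{\pi^2}$ that appears to have been dropped in the displayed inequality (the honest constant coming out of their estimates is $\frac{2}{\pi^2}\log(2e^{3/2})+\frac{2}{\pi^2}\log(\pi/2)\approx 0.54$ rather than $\frac13$), so a sharper treatment of $J_2$ of the kind you propose is in fact needed to justify the constant $\frac13$. Only caveat: your ``fully elementary'' fallback for $J_2$ (period-by-period summation) loses enough in the additive constant that it closes only for noticeably larger $m$; keep the $\mathrm{Ci}$ version with $|\mathrm{Ci}(x)|\le 2/x$ as the main line.
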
  
   
   \begin{proof} By symmetry we have
   $$
   \|\mathcal Q_m\|_{\scriptscriptstyle \rm HS}^2=\iint_{I\times I} \frac{\sin^2(\pi m(x-y))}{\pi^2(x-y)^2} \, dx \, dy=4\iint_D \frac{\sin^2(\pi m(x-y))}{\pi^2(x-y)^2} \, dx \, dy, $$
   where the domain of integration
   $D$ may be described as $ D=\{(x, y); 0<x<1/2, |y|<x\}.$ We consider  the new variables $u=x-y$ and $y$, so that  $0<u<1, -\frac u2<y<\frac 12-u$.
   Also, note that 
   $$\int_0^\infty \frac{\sin^2(\pi u)}{\pi^2u^2} du =\frac{1}{2} \int_{\mathbb R} \frac{\sin^2(\pi u)}{\pi^2u^2} du=1.$$
   The last equality is a trivial consequence of Plancherel's identity. By using the previous change of variables, together with the previous equality, one gets
  \begin{eqnarray}\label{Eq2.00}
   \|\mathcal Q_m\|_{\scriptscriptstyle \rm HS}^2 &=& 2\int_{0}^1 (1-u) \frac{\sin^2(\pi m u)}{\pi^2u^2} du  
   = \left[\int_{0}^m\frac{\sin^2(\pi u)}{\pi^2u^2} du\right]-\frac{2}{\pi^2}\left[\int_0^m\frac{\sin^2(\pi u)}{u} du\right] \nonumber\\
   &=&2m\left[  \int_0^\infty \frac{\sin^2(\pi u)}{\pi^2u^2}- \int_{u>m}\frac{\sin^2(\pi u)}{\pi^2u^2} du\right]-\frac{1}{\pi^2}\left[2\int_0^1 \frac{\sin^2(\pi u)}{u} du+\int_1^m\frac{1-\cos(2\pi u)}{u} du\right] \nonumber \\
   &=&   m-2m\int_{u>m}\frac{\sin^2(\pi u)}{\pi^2u^2} du -\frac{1}{\pi^2}\left[2\int _{0<u<1}\frac{\sin^2(\pi u)}{u} du+\log(m)-\int _{1<u<m}\frac{\cos(2\pi u)}{u} du\right].\nonumber
   \end{eqnarray} 
   We have found the two main terms and it remains to bound 
  $$|\mathcal E|\leq 2m\int_{|u|>m}\frac{\sin^2(\pi u)}{\pi^2u^2} du + \frac{1}{\pi^2}\left[2\int _{0<u<1}\frac{\sin^2(\pi u)}{u} du +\left|\int _{1<u<m}\frac{\cos(2\pi u)}{u} du\right|\right].$$ 
   The first term is bounded by $\frac{2}{\pi^2}.$ For the last one, as it is classical we perform an integration by parts and find
  $$\frac{1}{\pi^2}\left|\int _{1<u<m}\frac{\cos(2\pi u)}{u} du\right|=
 \frac{1}{\pi^2} \left|\frac{\sin(2\pi m)}{2\pi m}+\int_1^m \frac{\sin(2\pi u)}{2\pi u^2} du \right|\leq \frac{1}{\pi^3}.$$
 We compute numerically the term in the middle and find that 
  $$ \frac{2}{\pi^2} \int _{0<u<1}\frac{\sin^2(\pi u)}{u} du \approx 0.247.$$
  Collecting everything together, one gets the estimate of the error term given in \eqref{norm_Hs}. 
    \end{proof}
   By using the previous lemma, one gets the following statement, which provides us with estimates for $ \deg_\infty (\mathcal Q_m, \varepsilon).$
   \begin{lem} Let $m\geq 4$ be a positive real number. For $\varepsilon < 1/2,$ the  number of  degrees of freedom satisfies the inequalities
 \begin{equation}\label{deg_inftyQ} m-1\leq  \deg_\infty (\mathcal Q_m, \varepsilon)\leq m+\varepsilon^{-1}\log m.
 \end{equation}  For $1/2<\varepsilon <1, $ these inequalities are replaced by
 \begin{equation}\label{deg-big}
 m-(1-\varepsilon)^{-1}\log m\leq  \deg_\infty (\mathcal Q_m, \varepsilon)\leq \lceil m\rceil.
 \end{equation}
  \end{lem}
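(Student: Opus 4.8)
The plan is to estimate $\deg_\infty(\mathcal Q_m,\varepsilon)$ by combining the decay estimate of Theorem~\ref{decay-th}, Landau's double inequality~\eqref{landau}, the trace identity $\sum_j\lambda_j(\mathcal Q_m)=m$, and the sharpened Hilbert--Schmidt lower bound~\eqref{norm_Hs}. The two regimes $\varepsilon<1/2$ and $1/2<\varepsilon<1$ are treated separately, since in the first one the relevant threshold sits in or past the plunge region (to the right of $j=m$), while in the second it sits to the left of $j=m$, where the eigenvalues are close to $1$.

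For the lower bounds I would argue as follows. When $\varepsilon<1/2$, Landau's inequality~\eqref{landau} gives $\lambda_{[m]-1}(\mathcal Q_m)\ge 1/2>\varepsilon$, so the smallest index $s$ with $\lambda_s(\mathcal Q_m)\le\varepsilon$ must satisfy $s\ge [m]-1\ge m-1$, which is the asserted bound $m-1\le\deg_\infty(\mathcal Q_m,\varepsilon)$. When $1/2<\varepsilon<1$, I want the stronger lower bound $m-\frac{\log m}{1-\varepsilon}$; here the idea is to bound how many eigenvalues can have dropped below $\varepsilon$ before index $m$. Write $s=\deg_\infty(\mathcal Q_m,\varepsilon)$, so $\lambda_j(\mathcal Q_m)>\varepsilon$ for $j<s$ and, by monotonicity, $\lambda_j(\mathcal Q_m)\le 1$ always. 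Then
$$
m=\sum_{j\ge 0}\lambda_j(\mathcal Q_m)\le \|\mathcal Q_m\|_{\HS}^2+\big(\text{correction from }\lambda_j\le\lambda_j^2\text{ being false}\big),
$$
so instead I would compare $\sum_j\lambda_j$ with $\sum_j\lambda_j^2$: since $0\le\lambda_j\le 1$ one has $\lambda_j-\lambda_j^2=\lambda_j(1-\lambda_j)$, and $\sum_j\lambda_j(1-\lambda_j)=m-\|\mathcal Q_m\|_{\HS}^2\le \frac{2}{\pi^2}\log m+\frac13$ by~\eqref{norm_Hs}. On the other hand, for $j\ge s$ we have $\lambda_j\le\varepsilon$, hence $\lambda_j(1-\lambda_j)\ge\lambda_j(1-\varepsilon)$ is the wrong direction; instead I use that for $j<s$ with $\varepsilon>1/2$ the eigenvalues exceeding $\varepsilon$ still satisfy $1-\lambda_j\ge 0$, while those below contribute, and more usefully: $m-s\le\sum_{j\ge s}\lambda_j+\big(\text{stuff}\big)$... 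The cleanest route is: $\sum_{j\ge s}\lambda_j = m-\sum_{j<s}\lambda_j\le m-s\varepsilon$ is again not quite it. Let me instead bound the deficit below index $s$: $\sum_{j<s}(1-\lambda_j)\le \frac{1}{1-\lambda_{s-1}}\sum_{j<s}\lambda_j(1-\lambda_j)\le\frac{1}{1-\varepsilon}\big(\frac{2}{\pi^2}\log m+\frac13\big)$, using $1-\lambda_{s-1}\ge 1-\lambda_0$... which is not $\ge 1-\varepsilon$. The correct inequality is the reverse: for $j<s$, $\lambda_j>\varepsilon$, so $1-\lambda_j<1-\varepsilon$ is false in general. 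I will therefore split at $m$: for $j$ with $\lambda_j(\mathcal Q_m)\le 1-\delta$ one controls counts via $\sum\lambda_j(1-\lambda_j)$, and the bound $m-\frac{\log m}{1-\varepsilon}\le\deg_\infty$ follows by noting that the number of indices $j<m$ with $\lambda_j\le\varepsilon$ is at most $\frac{1}{1-\varepsilon}\sum_{j:\lambda_j\le\varepsilon}(1-\lambda_j)\le\frac{1}{1-\varepsilon}\sum_j\lambda_j(1-\lambda_j)\cdot\frac{1}{\varepsilon}$, absorbing constants into the $O$-free explicit bound after using~\eqref{norm_Hs}.

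For the upper bounds: when $1/2<\varepsilon<1$, Landau's inequality gives $\lambda_{\lceil m\rceil}(\mathcal Q_m)\le 1/2<\varepsilon$, so $\deg_\infty(\mathcal Q_m,\varepsilon)\le\lceil m\rceil\le m+1$, which is exactly~\eqref{deg-big}'s upper half. When $\varepsilon<1/2$, I invoke Theorem~\ref{decay-th}: $\lambda_k(\mathcal Q_m)\le C\exp[-\eta(k-m)/\log m]$ for $k\ge m$, so $\lambda_k(\mathcal Q_m)\le\varepsilon$ as soon as $C\exp[-\eta(k-m)/\log m]\le\varepsilon$, i.e. $k-m\ge\frac{\log m}{\eta}\log(C/\varepsilon)$. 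Hence $\deg_\infty(\mathcal Q_m,\varepsilon)\le m+\frac{\log m}{\eta}\log(C/\varepsilon) = m+O(\varepsilon^{-1}\log m)$, since $\log(C/\varepsilon)\le C'\varepsilon^{-1}$ for small $\varepsilon$ (or more simply $\log(1/\varepsilon)=O(\varepsilon^{-1})$); this yields~\eqref{deg_inftyQ}.

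The main obstacle I anticipate is the lower bound $m-\frac{\log m}{1-\varepsilon}$ in the regime $1/2<\varepsilon<1$: getting the explicit constant $\frac{1}{1-\varepsilon}$ in front of $\log m$ requires carefully pairing the trace deficit $m-\|\mathcal Q_m\|_{\HS}^2=\sum_j\lambda_j(1-\lambda_j)$ with a counting argument for indices below $m$ whose eigenvalue has already fallen below $\varepsilon$, and one must be careful that the factor $1-\varepsilon$ (rather than $\varepsilon(1-\varepsilon)$ or similar) comes out, which forces the right grouping of terms and use of monotonicity of $\lambda_j$. The other steps are short consequences of results already in the excerpt.
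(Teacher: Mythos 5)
Three of your four bounds are in order. The two Landau-based bounds ($m-1\le\deg_\infty(\mathcal Q_m,\varepsilon)$ for $\varepsilon<1/2$, and $\deg_\infty(\mathcal Q_m,\varepsilon)\le\lceil m\rceil\le m+1$ for $\varepsilon>1/2$) are exactly the paper's argument. For the upper bound in the regime $\varepsilon<1/2$ you invoke the exponential decay of Theorem \ref{decay-th} and obtain $m+O(\log(\varepsilon^{-1})\log m)$; the paper instead derives $\sum_{j\ge m+1}\lambda_j(\mathcal Q_m)\le\log m$ from the trace identity, the Hilbert--Schmidt lower bound \eqref{norm_Hs} and Landau's inequality, and then counts the indices past $m$ with $\lambda_j>\varepsilon$, each of which contributes at least $\varepsilon$ to that sum, giving $m+O(\varepsilon^{-1}\log m)$. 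Your route is legitimate and in fact sharper; it is precisely the improvement the paper records in the remark following the lemma, so no objection there.

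The genuine gap is the lower bound $m-\frac{\log m}{1-\varepsilon}\le\deg_\infty(\mathcal Q_m,\varepsilon)$ for $1/2<\varepsilon<1$, which you flag yourself as unresolved. Your final written chain contains a step in the wrong direction: for an index with $\lambda_j\le\varepsilon$ one has $1-\lambda_j\ge\frac{1}{\varepsilon}\,\lambda_j(1-\lambda_j)$, not $\le$, so $\sum_{j:\lambda_j\le\varepsilon}(1-\lambda_j)$ cannot be controlled by $\frac{1}{\varepsilon}\sum_j\lambda_j(1-\lambda_j)$ (eigenvalues near $0$ contribute almost $1$ to the left side and almost nothing to the right). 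The missing ingredient is to apply Landau's inequality \eqref{landau} on the \emph{left} of $m$: for $j\le[m]-1$ one has $\lambda_j(\mathcal Q_m)\ge1/2$, hence $1-\lambda_j(\mathcal Q_m)\le2\lambda_j(\mathcal Q_m)\bigl(1-\lambda_j(\mathcal Q_m)\bigr)$, and therefore
$$\sum_{j\le m-1}\bigl(1-\lambda_j(\mathcal Q_m)\bigr)\;\le\;2\sum_{j\ge0}\lambda_j(\mathcal Q_m)\bigl(1-\lambda_j(\mathcal Q_m)\bigr)\;=\;2\bigl(\trace(\mathcal Q_m)-\|\mathcal Q_m\|_{HS}^2\bigr)\;\le\;\log m,$$
the last step by \eqref{norm_Hs}; this is the paper's inequality \eqref{HL1}. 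Every index $j\le m-1$ with $\lambda_j(\mathcal Q_m)<\varepsilon$ contributes more than $1-\varepsilon$ to this sum, so there are at most $\frac{\log m}{1-\varepsilon}$ of them, which is exactly the claimed lower bound. Once you restrict to $j\le m-1$ and use $\lambda_j\ge1/2$ there, the constant $\frac{1}{1-\varepsilon}$ falls out immediately, with no spurious factor of $\frac{1}{\varepsilon}$.
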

 \begin{proof}
 The left-hand side of the first inequality, as well as the right-hand side of the second one, follow from Landau's double inequality \eqref{landau}. 
 To prove the second inequality, we first know  from Lemma \ref{Estimate_norm} that, for $m\geq 4,$
 \begin{equation}\label{hogan}
 m-\frac{1}{2}\log m\leq \|\mathcal Q_m\|_{\scriptscriptstyle \rm HS}^2 =\sum_{j= 0}^{\infty}\lambda_j(\mathcal Q_m)^2.
 \end{equation}
  On the other hand, we have the identity 
 $$
 \trace(\mathcal Q_m)=\sum_{j= 0}^{\infty}\lambda_j(\mathcal Q_m)=m.
 $$
 It follows that
 \begin{equation}\label{HL0}
 \sum_{j\geq 0}\lambda_j(\mathcal Q_m)(1-\lambda_j(\mathcal Q_m))
 =\trace(\mathcal Q_m)-\|\mathcal Q_m\|_{\scriptscriptstyle \rm HS}^2
 \leq \frac{1}{2}\log m.
 \end{equation}
   Moreover, by using \eqref{HL0} and \eqref{landau}, we obtain the two inequalities
  \begin{eqnarray} \label{HL1}
  \sum_{j\leq m-1}(1-\lambda_j(\mathcal Q_m))&\leq&  \log m \\
   \sum_{j\geq m+1}\lambda_j(\mathcal Q_m)&\leq&  \log m. \label{HL3}
  \end{eqnarray}
  As a consequence of \eqref{HL1}, we have the inequality
   $$\#\{j\leq m-1\,,\lambda_j(\mathcal Q_m)<\varepsilon\}\leq \frac{ \log m}{1-\varepsilon}.$$
   The left-hand side of \eqref{deg-big} follows at once. The remaining inequality, that is, the right-hand side of \eqref{deg_inftyQ}, follows from a similar argument, with \eqref{HL3} used in place of \eqref{HL1}.
 \end{proof}
 \begin{rem} In the inequality \eqref{deg_inftyQ}, it is possible to replace $\varepsilon^{-1}$ by $C\log (\varepsilon^{-1}),$ 
 for some uniform constant $C>0.$ This follows from the decay of eigenvalues given in \eqref{decay}.
    \end{rem}
 We summarize the previous lemma by saying that, for $\varepsilon<1$, 
 \begin{equation}\label{resume}
 \deg_\infty (\mathcal Q_m, \varepsilon)=m(1+\mathcal E),\qquad{\rm with}\quad |\mathcal E|\leq \frac{\log m}{\varepsilon(1-\varepsilon)m}.
 \end{equation}
 
 Let us now consider the degrees of freedom of the random matrices $H_m$ and $A^*A,$ which we define as follows.
 \begin{defn} Let $M$ be a  positive semi-definite random Hermitian matrix. We define the numbers of degree of freedom at level $\varepsilon$ and confidence level $\alpha$ by
   \begin{equation}\label{deg_rand}
    \deg_\infty (M, \varepsilon, \alpha)=\min \{s ; \lambda_{s}(T)\leq \varepsilon \quad \mbox {\rm with probability}\, \geq \alpha\}.
   \end{equation}
   \end{defn}
One could use \eqref{error-ind1}  to find bounds for $\deg_\infty (M, \varepsilon, e^{-\xi})$. This requires that $m^2/n$ is small enough, while we only want to assume that $m/n$ is small enough. In this direction, we note that 
if  $T_1, T_2$ are   two self-adjoint Hilbert-Schmidt operators  and if $0<\varepsilon_1<\varepsilon_2,$ then we have
\begin{equation}
\label{Comp_degrees}
\deg_\infty ( T_2, \varepsilon_2)\leq \deg_\infty (T_1, \varepsilon_1)+\frac{\sum_{j\geq 0} (\lambda_j(T_1)-\lambda_j(T_2))^2}{(\varepsilon_1-\varepsilon_2)^2}.
\end{equation}
Indeed, the previous inequality follows from the fact that 
\begin{eqnarray*}
\#\{j>\deg_\infty (T_1, \varepsilon_1)\,,\lambda_j(T_2)>\varepsilon_2\}&\leq& \#\{j>\deg_\infty (T_1, \varepsilon_1)\,,\lambda_j(T_2)-\lambda_j(T_1)>\varepsilon_2-\varepsilon_1\}\\
&\leq &\frac{\sum_{j\geq 0} (\lambda_j(T_1)-\lambda_j(T_2))^2}{(\varepsilon_1-\varepsilon_2)^2}.
\end{eqnarray*}
For $0<\varepsilon<1$, we use Inequality \eqref{Comp_degrees} with $T_1=A^*A,\, T_2=\mathcal Q_m$ (resp. $T_1=H_m$) and $\varepsilon_2=\varepsilon,\, \varepsilon_1=\varepsilon/2$ to give a bound above for $\deg_\infty(A^*A, \varepsilon, e^{-\xi}).$ 
Then we exchange the role of $\mathcal Q_m$ and $A^*A$ (resp. $H_m$) and take $\varepsilon_1=\varepsilon, \,\varepsilon_2=\min(2\varepsilon, \frac{1+\varepsilon}{2})$ for the bound below. Using \eqref{resume},  one gets the following proposition. 
   \begin{prop} For $\varepsilon, \xi >0,$ we have
   $$ \deg_\infty (H_m, \varepsilon, e^{-\xi})=m(1+\mathcal E_{H_m}) \quad , \deg_\infty (A^*A, \varepsilon, \alpha)=m(1+\mathcal E_A),$$
   with $|\mathcal E_{H_m}|, |\mathcal E_A| \leq C_{\varepsilon}\left( \frac {m(\xi^2+1)}{n }+\frac{\log m}{m}\right).$ Moreover, the constant   $C_{\varepsilon}$ can be bounded by $\frac{C}{\varepsilon ^2(1-\varepsilon)^2},$ where $C>0$ is a uniform constant.
\end{prop}
    The relative errors are small for $m$ large and $n/m$ large, as we wanted to prove. Asymptotically, when these two quantities tend to $\infty,$ we have $\deg_\infty (H_m, \varepsilon, \delta)\sim m$ and $\deg_\infty (A^*A, \varepsilon, \delta)\sim m$ for fixed $\varepsilon$ and $\delta.$

 \subsection{Capacity of a random matrix}
 We proceed as we have done for the degrees of freedom by considering first the integral operator. Let us define, for  $s>0,$ the capacity
 associated with the Sinc-kernel integral operator $\mathcal Q_m,$
 $$C_{\mathcal Q_m}(s)=\sum_{k\geq 0}\log\big(1+s\lambda_k(\mathcal Q_m)\big).$$
 We claim the following, which  says that, for $m$ tending to infinity, the capacity is well approximated by the  one of an operator that has $m$ non zero eigenvalues, all  equal to $1$.
 \begin{prop} \label{capQ} There exists a positive constant $\delta$ such that, for any $m\geq 4$ and any $s>0,$ we have
 \begin{equation}
 \label{CapacityQ}
 C_{\mathcal Q_m}(s)=m \log(1+s)(1+\mathcal E),\qquad |\mathcal E|\leq \delta \frac{\log (m)}m  (1+\log_+(s)).
 \end{equation}
   \end{prop}
   \begin{rem} The constraint $m\geq 4$ could be relaxed in proofs, but for smaller values of $m$ the error terms, which contain $\log(m)/m,$ are very large. This is why we assume that $m\geq 4$ as for degrees of freedom.
   \end{rem}
 \begin{proof} Three cases are  considered in the proof. The constant $\delta$ may vary from line to line.
 \begin{enumerate}
 \item {\sl Bound above for $s\leq 2.$} Since all eigenvalues are smaller than $1$, 
 $$ C_{\mathcal Q_m}(s)\leq m \log(1+s)+ \sum_{k\geq m }\log(1+s\lambda_k(\mathcal Q_m)).$$
 The second term is bounded by $ s\sum_{k\geq m }\lambda_k (\mathcal Q_m)$, which is bounded by $s(\log(m)+1)$ because of \eqref{HL3}. Since $s\leq 2$, there exists $\delta$ such that $s\leq \delta \log(1+s),$ which allows to conclude.
  \item {\sl Bound above for $s \geq  2.$} The inequality \eqref{decay} implies that, for an integer  $\ell >m$, we have 
 $$\sum_{k\geq \ell}\lambda_k(\mathcal{Q} _m)\leq \frac{C\log(m)}{\eta}e^{-\frac{\eta(\ell-m)}{\log(m)}}.$$
 We define $m_{s}$ as 
 $$m_s =\min \{\ell \in \mathbb N,\,\, \ell\geq m  +\eta^{-1}\log(m)\log(s)\}.$$
  It follows from the previous inequality that $\sum_{k\geq m_s}\lambda_k(\mathcal{Q} _m)\leq \frac{\delta \log(m)}{s} $ for some uniform constant $\delta.$ We modify slightly the proof given for $s\leq 2,$ by writing that
\begin{equation}\label{CapacityQQ}
C_{\mathcal Q_m}(s)\leq m_s \log(1+s)+ \sum_{k\geq m_s} \log \left(1+s \lambda_k(\mathcal Q_m)\right).
\end{equation} 
The last term is bounded by $\delta \log (m).$ Consequently, we have
\begin{eqnarray*}
 C_{\mathcal Q_m}(s)- m\log(1+s)&\leq& (m_s-m)\log(1+s) +\delta \log(m)\\
 &\leq & \eta^{-1} \log(m)\log(1+s)\log(s) +\delta \log(m)\\
 &\leq & \big(\log(1+s)\big)^2 \log(m)\left(1+\frac{\delta}{(\log(1+s))^2}\right).
\end{eqnarray*} 
 This allows us to conclude for this  case.

 \item {\sl Bound below for $s >0.$} We have 
 \begin{eqnarray*}
  C_{\mathcal Q_m}(s)&\geq&  \sum_{k<[m]} \log \left(1+s \lambda_k(\mathcal Q_m)\right)= \sum_{k<[m]} 
  \log \left(\frac{1+s \lambda_k(\mathcal Q_m)}{1+s} (1+s)\right)\\
  &\geq& [m] \log(1+s)+ \sum_{k<[m]} \log \left( \frac{1+s \lambda_k(\mathcal Q_m) }{1+s}\right).
 \end{eqnarray*}
 All the terms  $\lambda_k(\mathcal Q_m)$ of this expression are bounded below by $1/2.$ Since $\log(1-x)\geq -2x$  when $ 0\leq x\leq 1/2,$ one gets
$$
 \log\left(\frac{1+s\lambda_k(\mathcal Q_m)}{1+s}\right)=\log\left(1-\frac{s}{1+s}(1-\lambda_k(\mathcal Q_m))\right)
 \geq -\frac{2s}{1+s}(1-\lambda_k(\mathcal Q_m)).$$
 But, from  \eqref{HL1}, we have 
 $$\sum_{k<[m]}(1-\lambda_k(\mathcal Q_m))\leq \log(m).$$ We conclude as before, using the fact that $\frac{2s}{1+s}\leq \delta\log(1+s)$ for all $s>0,$ so that 
 $$-\frac{2s}{1+s} \sum_{k<[m]}(1-\lambda_k(\mathcal Q_m) \geq -\delta \log(1+s) \log(m).$$
 Consequently, we have 
 \begin{equation}\label{Ineq3.4}
  C_{\mathcal Q_m}(s) \geq m \log(1+s) -\delta \log(1+s) \log(m). 
 \end{equation}

 \end{enumerate}
  \end{proof}

 We now pass from $\mathcal Q_m$ to $A^*A.$ In the sequel, we define the capacity of the matrix $A^*A$ by
 \begin{equation}
 \label{CapacityA}
 C_{A^*A}(s)= \sum_{k\geq 0} \log\big(1+s \lambda_k(A^*A)\big).
 \end{equation}
 In order to get an estimate of the previous capacity, we need the following technical lemma.
 
 \begin{lem}
 For any real $m\geq 4,$ we have the two inequalities
 \begin{equation}\label{Ineq3.1}
\E  \Big(\sum_{k<m} \left(\lambda_k(A^*A)-1\right)_+\Big) \leq \sqrt\frac{2(m+1)}{{n}},
  \end{equation}
   \begin{equation}\label{Ineq3.1bis}
\E  \Big(\sum_{k<m} \left(1-\lambda_k(A^*A)\right)_+\Big) \leq \log(m)+1+\sqrt\frac{2(m+1)}{{n}},
  \end{equation}
 Moreover, let $M_{\frac 12}$ be the (random) number of the integers  $k<m$ such that $\lambda_k(A^*A)<1/2.$ Then
 \begin{equation}\label{Ineq3.2}
 \E(M_{\frac 12}) \leq\frac{ 16 m^2}{n}+8 \log(m).
   \end{equation}
 \end{lem}
 
 \begin{proof} To prove \eqref{Ineq3.1}, we first recall that $\lambda_k(\mathcal Q_m)<1$ and only those eigenvalues  $\lambda_k(A^*A)>1$ contribute to the sum in \eqref{Ineq3.1}. By a straightforward application of Cauchy-Schwarz inequality and by using Theorem~1,  we have
 \begin{eqnarray*}
\E\,  \Big(\sum_{k<m} \left(\lambda_k(A^*A)-1\right)_+\Big)&\leq& \sqrt{m+1}\,  \E\, \left(\sum_{k<m} \left(\lambda_k(A^*A)-1\right)_+^2\right)^{1/2}\\
&\leq& \sqrt{m+1}\, \E \, \left(\sum_{k<m} \left(\lambda_k(A^*A)-\lambda_k(\mathcal Q_m)\right)^2\right)^{1/2} \leq  m\sqrt\frac{2(m+1)}{{n}}.
  \end{eqnarray*}
To prove \eqref{Ineq3.1bis}, we write 
$$\E  \Big(\sum_{k<m} \left(1-\lambda_k(A^*A)\right)_+\Big)=\sum_{k<m} (1-\lambda(\mathcal Q_m))+\E \sum_{k<m}\big(\lambda(\mathcal Q_m)-\lambda_k(A^*A)\big)+\E  \Big(\sum_{k<m} \left(\lambda_k(A^*A)-1\right)_+\Big).$$
The first term is bounded by $\log(m)$ because of \eqref{HL1} and  the second one is non positive because of 
  \eqref{ineq}. Moreover, the third term is given by \eqref{Ineq3.1}.  
  
To prove \eqref{Ineq3.2}, we start from the inequality 
$$\frac{M_{\frac 12}}{4} \leq  \sum_{k<m}\left(1-\lambda_k(A^*A))\right)^2\leq 2 \sum_{k<m}\left(1-\lambda_k(\mathcal Q_m)\right)^2+2\sum_{k<m}\left(\lambda_k(\mathcal Q_m)-\lambda_k(A^*A))\right)^2.$$
 We take the expectation and find that
   $$
   \E (M_{\frac 12}) \leq  8\sum_{k<m}\left(1-\lambda_k(\mathcal Q_m)\right)^2+8\sum_{k<m}\E\left(\lambda_k(\mathcal Q_m)-\lambda_k(A^*A))\right)^2.$$
The first sum is bounded by $\log( m)$ by \eqref{HL1}. The second one is bounded by $2 m^2/n$ by Theorem~1,
   which allows to conclude. 
 \end{proof}
 
 We can now state the main proposition of this section.
  
 \begin{prop} There exists a positive constant $\delta$ such that for all values of $s>0,$  $m\geq 4$ and $n>m,$ we have the following 
 approximation for the expectation of the capacity defined in \eqref{CapacityA},
 \begin{equation}\label{CapacityAA}
 \E( C_{A^*A}(s))= \E \Big(\sum_{k\geq 0} \log\big(1+s \lambda_k(A^*A)\big)\Big)= m\log\left(1+s\right)(1+\mathcal E),
 \end{equation}
where
 $$ |\mathcal E| \leq \delta\Big(\sqrt{\frac{m}{n}}+\frac{\log(m)}{m}\Big)\big(1+ \log_+(s)\big).$$ In particular, when $m$ and $n/m$ tend to $\infty,$
 $$\E(C_{A^*A}(s))\sim m \log\left(1+s\right).$$
 \end{prop}
 \begin{proof} 
 Let us first prove the bound above. We first assume that $s\leq  2$ and mimic the proof given for $C_{\mathcal Q_m}$ by cutting the sum   as we did in  \eqref{CapacityQQ}. As before, we can bound  terms by $\log(1+s)$ when $\lambda_k(A^*A)$ is bounded by $1.$ But  now some eigenvalues $\lambda_k(A^*A)$ may be larger than $1.$ This leads to a third term in the sum.   Namely, we write
  $$ \E C_{A^*A}(s)\leq m \log(1+s)+ \E\sum_{k<m, \lambda_k(A^*A)>1} \log\left(\frac{1+s\lambda_k(A^*A)}{1+s}\right)+\E\sum_{k\geq m }\log(1+s\lambda_k(A^*A)).$$
We  conclude for the third  term by using the fact that  the sum $\sum_{k\geq m}\E(\lambda_k(A^*A))$ is bounded by the corresponding sum  $\sum_{k\geq m}\lambda_k(\mathcal Q_m).$ This is the second inequality in \eqref{ineq}.  The second term is bounded by
$\frac{s}{1+s}\E \sum_{k<m}(\lambda_k(A^*A)-1)_+.$ Using \eqref{Ineq3.1}, one concludes that this term is bounded by $\sqrt{\frac{2(m+1)}{n}} .$ 
 Collecting everything together,
one gets a bound above for the error term $\mathcal E,$ when $s\leq 2.$  
The previous technique is also applied for  $s>2$ to yield
the required  bound above for $\mathcal E.$ The only difference is the fact that one needs the analogue of \eqref{Ineq3.1} with $m_s$ in place of $m.$ It is easily seen that this may be done with a supplementary factor ${\displaystyle \frac{m_s}{m}},$ when we consider the second term, that is a factor $\delta \sqrt{\log(1+s)}.$

For the bound below for $s>0,$ we  use the same techniques as in the corresponding proof  for the capacity $C_{\mathcal Q_m},$
but $[m]$ substituted with $m-M_{\frac 12}.$ Here,  $M_{\frac 12}$ has been defined in the previous lemma and is such that    $\lambda_k(A^*A)\geq 1/2$ for $k< m-M_{\frac 12}.$  In this case, we have 
\begin{eqnarray*}
C_{A^*A}(s)&\geq & (m- M_{\frac 12}) \log (1+s)+\sum_{k<m-M_{\frac 12}} \log \left( \frac{1+s \lambda_k(A^*A) }{1+s}\right)\\
 &\geq & (m-M_{\frac 12} )\log(1+s) +\sum_{k< m- M_{\frac 12}, \lambda_k(A^*A)<1} \log\Big(1-\frac{s}{1+s}(1-\lambda_k(A^*A))\Big)\\
 &\geq & m\log(1+s)-M_{\frac 12}\log(1+s)-\frac{2s}{s+1}\sum_{k <m} (1-\lambda_k(A^*A))_+.
\end{eqnarray*}
We take the expectations. 
To conclude for the bound below, it suffices to use the previous lemma.
 \end{proof}

 The previous  proposition deals with expectation. It is easy to deduce estimates with high probability by just using Markov Inequality. One may be tempted  to use McDiarmid's inequality for the mapping defined by the capacity, as in \cite{Ozgur}, to find accurate estimates as in Section 2.  From the computations made  in \cite{Ozgur}, it follows that, for $\xi>0$ and  with probability larger than $1-2e^{-\xi^2},$ we have 
 $$|C_{A^*A}(s)-\E(C_{A^*A}(s))|\leq 2\sqrt n\xi \log (1+sm).$$
 This is small compared to the principal term when $m$ is much larger than $\sqrt n.$ 
 
 Let us consider in particular $C(n)=C_{A^*A}(n^{2-\gamma}),$ under the assumption that $m=n^\gamma$ for $1/2<\gamma<1,$ as in the introduction. Then we get that, with probability larger than $1-2e^{-\xi^2},$ we have the estimate
 $$\left|\frac{C(n)}{(2-\gamma) n^{\gamma}\log(n)}-1 \right|\leq \beta (\xi+1) n^{\frac{\gamma-1}{2}} \log(n),$$
 for some uniform constant  $\beta.$

 \section{Numerical examples}

 We first note that due to the sub-Gaussian behaviour of the probability distribution of the  spectra approximation errors, one gets 
 relatively small errors with large probability. For example, for the concrete value of $\alpha=0.01,$ that is the error bounds of Theorem~\ref{Main1} are obtained with a probability $0.99$ as soon as $\xi\geq \xi_{0.01}\approx 2.15.$ Moreover, the errors \eqref{error1} and \eqref{error2} are smaller than $\sqrt{m}$ as soon  as $n/m \geq 20$ and $n/m \geq 10,$ respectively.\\

\noindent
  The simulations below are done with  samples of sizes $2 n$ of the uniform law on $(-1/2, + 1/2)$ in place of the $Z_j$'s and the $Y_k$'s for different values of $n$.
 \\

 \noindent
 {\bf Example 1:} In this first example, we illustrate the results of  Theorem~2 and Theorem~3. For this purpose, 
 we have considered the mean of the spectra of $(A^*A)$ and $H_{m},$ obtained from 10 realizations,  with $n=300$ and different values of
  $2\leq m\leq 20.$ Since for each value of $m,$  there is approximately $m$ significant eigenvalues, then we have computed the approximation relative $ \ell^2-$errors, given by  $\frac{1}{\sqrt{m}} \|\lambda (A^*A)-\lambda (H_m)\|_{\ell^2},$  $\frac{1}{\sqrt{m}} \| \lambda (H_m)-\lambda (\mathcal Q_m)\|_{\ell^2}$ and
    $\frac{1}{\sqrt{m}} \|\lambda (A^*A)-\lambda (\mathcal Q_m)\|_{\ell^2}.$ Also, we have computed the magnitude of the corresponding theoretical relative error, given by the quantity ${\displaystyle \sqrt{\frac{m}{n}}.}$ Recall that $\|\lambda(\mathcal Q_m)\|_{\scriptscriptstyle \rm HS}\approx \sqrt{m}.$   The obtained numerical results  are given by Table 1. These numerical results indicate that the approximation errors between the spectra    $\lambda(A^*A)$ and $\lambda(H_m)$ is smaller than the approximation errors corresponding to the spectra 
     $\lambda(A^*A)$ and $\lambda(\mathcal Q_m),$ as well as $\lambda(H_m)$ and $\lambda(\mathcal Q_m).$
   \begin{center}
    \begin{table}[h]
    \vskip 0.02cm\hspace*{2cm}
    \begin{tabular}{ccccc} \hline
     $n=300$ 
         &$\frac{\| \lambda (A^*A)- \lambda (H_m)\|_{\ell^2}}{\sqrt{m}} $&$\frac{\| \lambda (H_m)-\lambda (\mathcal Q_m)\|_{\ell^2}}{\sqrt{m}}$&$\frac{\|  \lambda (A^*A)-\lambda (\mathcal Q_m)\|_{\ell^2}}{\sqrt{m}} $&$\sqrt{\frac{m}{n}}$ \\   \hline
      $m=2$  &$02.09\%$   &$00.28 \%$   & $02.38\%$& $08.16 \%$ \\
      $m=4$  &$01.83 \%$   &$06.02 \%$   & $04.67 \%$& $11.54\%$   \\
      $m=6$  &$01.85 \%$   &$09.62 \%$   & $10.77  \%$& $14.14\%$  \\
      $m=10$ &$03.32\%$   &$13.36 \%$   & $16.34 \%$& $18.26 \%$\\
      $m=20$ &$09.42 \%$   &$22.89 \%$   & $30.57 \%$& $25.82 \%$  \\  \hline
        \end{tabular}
    \caption{Illustrations of the mean over $10$  realizations for the  $\ell^2-$errors, given by  Theorem~2 and Theorem~3, for the Sinc-kernel with  $n=300$  and different values       of $m.$ }
    \end{table}
    \end{center}
  
  \noindent
 {\bf Example 2:} In this example, we have considered the random matrices $A^*A$ and  $H_m,$ as described by section~3, with
 $n=300$ and different values of the bandwidth $2\leq m\leq 20.$  In Figure 1 (a), we have plotted the eigenvalues $(\lambda_{j}(A^*A))_{0\leq j\leq 35 }$ of the random matrix $A^*A,$ arranged in the decreasing order, versus the eigenvalues of $\mathcal Q_m.$  Then, we have repeated the previous numerical tests with the random matrix $H$ instead of the matrix $A^*A.$ The obtained numerical results are given by Figure 1(b). Note that as predicted by proposition 3, the matrices $A^*A$  and $H_m,$ each has $m$ significant eigenvalues. \\
  Also in order to check the decay of
 the eigenvalues of the random matrices $A^*A$ and  $H_m,$     we have plotted in Figures 2 (a) and 2(b),
 the graphs of  $\log(\lambda_j(A^*A))$ and  $\log(\lambda_j(H_m)).$   Note that as predicted by our theoretical results, the eigenvalues of the random matrices $A^* A$ and $H_m$  have  fast decays, starting around $k=m.$ Note that the first few eigenvalues $\lambda_j(A^*A)$ are relatively  larger  than  the corresponding eigenvalues $\lambda_j(\mathcal Q_m).$ We should mention that up to know, we do not have a satisfactory explanation to this local behaviour.

    \begin{figure}[h]\hspace*{1.5cm}
  {\includegraphics[width=13.05cm,height=4.5cm]{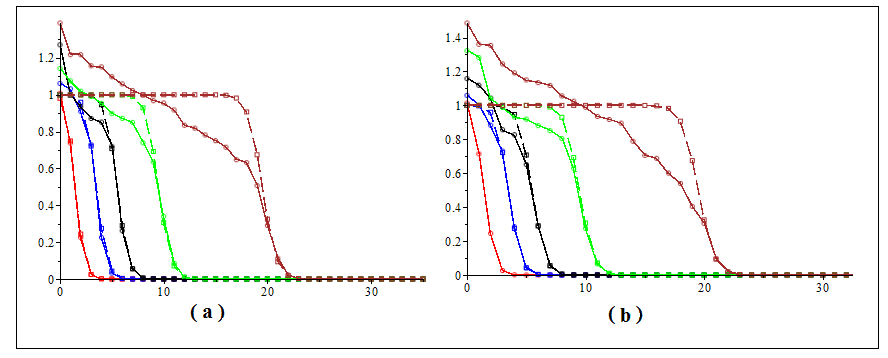}}
  \vskip -0.5cm\hspace*{1cm} \caption{ (a) Graphs of  $ \lambda(A^* A)$ (circles) versus $\lambda(\mathcal Q_m) $ (boxes) with $n=300$
   and  for the  various values of $m=2,4,6, 10, 20,$ (from the left to the right), (b) same as (a) with $ \lambda(H_m)$ instead of  $\lambda(A^* A).$ }
  \end{figure}
  \begin{figure}[h]\hspace*{1.5cm}
  {\includegraphics[width=13.05cm,height=4.5cm]{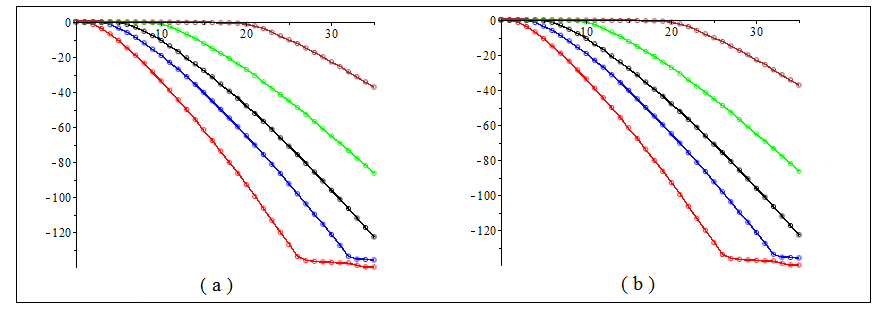}}
  \vskip -0.5cm\hspace*{1cm} \caption{  (a) Graphs of  $\log(\lambda_j(A^*A))$ with $n=300$
    and  for the  various values of $m=2,4,6, 10, 20,$ (from the left to the right), (b) same as (a) with $\log( \lambda_j(H_m))$ instead of  $\log( \lambda_j(A^*A)).$   }
  \end{figure}

    \noindent
  {\bf Example 3:} In this last example, we illustrate our theoretical estimate for the network capacity. We recall that
  this capacity is given by equation \eqref{Capacity}.
  To illustrate the previous bound estimate of the network capacity, we have considered the value of $n=300$
   and the four values of $m=2, 4, 6, 10, 20.$ Then, we  have computed the eigenvalues  $\frac{n^2}{m} \lambda_j(A^*A)$ of the  matrices $\frac{n^2}{m} A^*A.$   In Table 2, we have listed the values
  of
  \begin{equation}
  \label{capacity4}
  C(n)=\log \det\left(I_n+ \frac{n^2}{m} A^*A\right) =\sum_{j=1}^n \log\left(1+\frac{n^2}{m} \lambda_j(A^*A)\right),
  \end{equation}
   as well as the values of the corresponding estimations $\widetilde C(n)$ as well as the corresponding relative errors $E(n),$  given by
  \begin{equation}
  \label{errors}
 \widetilde C(n)=  m\log\left(\frac{n^2}{m}\right),\quad  E(n)=\frac{|C(n)-\widetilde C(n)|}{C(n)},\quad .
  \end{equation}

 \begin{center}
  \begin{table}[h]
  \vskip 0.02cm\hspace*{3cm}
  \begin{tabular}{cccc} \hline
   $m$ &$C(n)$& $\widetilde C(n)$ & $E(n)= \frac{|C(n)-\widetilde C(n)|}{C(n)}$\\   \hline
  $m=2$  &$42.05$   &$21.42 $ & $0.49$  \\
  $m=4$  &$60.59 $   &$40.08$ & $0.34$  \\
  $m=6$  &$78.94$   &$57.69 $ & $0.27$  \\
  $m=10$  &$112.18 $   &$91.05 $ & $0.19$    \\
  $m=20$ &$188.20$   &$168.24 $  & $0.11$  \\\hline
   \end{tabular}
  \caption{Illustrations of our bound estimate of the network capacity $C(n),$ given by \eqref{capacity4} and for different values of
  $m.$}
  \end{table}
  \end{center}

  \noindent
 {\bf Acknowledgements.} The authors thank Marguerite Zani and Radoslaw Adamczak for useful comments and discussions.

\end{document}